\providecommand{\U}[1]{\protect\rule{.1in}{.1in}}
\def\R{\mathbb R}
\def\rN{{{\R}^N}}
\newtheorem{theorem}{Theorem}[section]
\newtheorem{lemma}{Lemma}[section]
\newtheorem{corollary}[theorem]{Corollary}
\newtheorem{definition}[theorem]{Definition}
\newenvironment{proof}[1][Proof]{\textbf{#1.} }{\ \rule{0.5em}{0.5em}}
\begin{document}

\title{ Comparison results for nonlinear anisotropic parabolic problems}
\author{A. Alberico\thanks{Istituto per le Applicazioni del Calcolo ``M. Picone'',
Sez. Napoli, Consiglio Nazionale delle Ricerche (CNR), Via P. Castellino 111,
80131 Napoli, Italy. E--mail:a.alberico@na.iac.cnr.it} -- G. di
Blasio\thanks{Dipartimento di Matematica e Fisica, Seconda Universit\`{a}
degli Studi di Napoli, Via Vivaldi, 43 - 81100 Caserta, Italy. E--mail:
giuseppina.diblasio@unina2.it} -- F. Feo\thanks{Dipartimento di Ingegneria,
Universit\`{a} degli Studi di Napoli \textquotedblleft
Pathenope\textquotedblright, Centro Direzionale Isola C4 80143 Napoli, Italy.
E--mail: filomena.feo@uniparthenope.it}}
\maketitle

\begin{abstract}
{\scriptsize {\negthinspace\negthinspace\negthinspace\ } Comparison results
for solutions to the Dirichlet problems for a class of nonlinear, anisotropic
parabolic equations are established. These results are obtained through a
semi-discretization method in time after providing estimates for solutions to
anisotropic elliptic problems with zero-order terms.
}

\end{abstract}

\footnotetext{\noindent\textit{Mathematics Subject Classifications: 35K55,
35K20,35B45.}
\par
\noindent\textit{Key words: Anisotropic symmetrization, Anisotropic parabolic
problems, a priori estimate.}}


\numberwithin{equation}{section}\numberwithin{equation}{section}

\section{Introduction}

In this work we prove comparison results for a class of nonlinear anisotropic
parabolic problems whose model case is%
\begin{equation}
\left\{
\begin{array}
[c]{lll}%
\partial_{t}u-\overset{N}{\underset{i=1}{\sum}}\left(  \alpha_{i}\left\vert
\partial_{x_{i}}u\right\vert ^{p_{i}-2}\partial_{x_{i}}u\right)  _{x_{i}%
}=f(x,t) & \quad\text{in }Q_{T}:=\Omega\times\left(  0,T\right)  & \\
u(x,0)=u_{0}(x) & \quad\text{in}\;\Omega & \\
u(x,t)=0 & \quad\text{on}\;\partial\Omega\times\left(  0,T\right)  , &
\end{array}
\right.  \label{intro}%
\end{equation}
where $\Omega$ is an open, bounded subset of $\mathbb{R}^{N}$ with Lipschitz
continuous boundary, $N\geq2$, $T>0,\alpha_{i}>0$ $\ $and $p_{i}\geq1$ for
$i=1,...,N${ such that} their harmonic mean $\bar{p}>1$ {and the data $f$}
and{ $u_{0}$ have a suitable summability.}

Problem (\ref{intro}) provides the mathematical models for natural phenomena
in biology and fluid mechanics. For example, they are the mathematical
description of the dynamics of fluids in anisotropic media when the
conductivities of the media are different in different directions. They also
appear in biology as a model for the propagation of epidemic diseases in
heterogeneous domains.

In the last years, anisotropic problems have been largely studied by many
authors (see \textit{e.g.} \cite{antontsev-chipot-08, BMS, castro2,
DiNardo-Feo-Guibe, DiNardo-Feo, FGK, FGL, FS, Gi, Mar}). The growing interest
has led to an extensive investigation also for problems governed by fully
anisotropic grows condition (see \textit{e.g. }\cite{A, AC, AdBF1, Clocal,
cianchi anisotropo}) and problems related to different type of anisotropy (see
\textit{e.g.} \cite{AFTL, BFK, DPdB,DPG}).

We emphasize that, when $p_{i}=p\neq2$ for $i=1,...,N$ the anisotropic
diffusion operator in problem (\ref{intro}) coincides with the so-called
pseudo-Laplacian operator, whereas when $p_{i}=2$ for $i=1,...,N$ it coincides
with usual Laplacian.

Symmetrization methods in a priori estimates for solutions to isotropic
parabolic problems were widely used (see \textit{e.g.} \cite{ALT1},\cite{B},
\cite{D}, \cite{FV}, \cite{MR}, \cite{V} and the bibliography starting with it).

As in the isotropic setting (see \textit{e.g.} \cite{Ta1}), if $w$ solves the
stationary anisotropic problem%
\[
\ \left\{
\begin{array}
[c]{ll}%
-\underset{i=1}{\overset{N}{%
{\displaystyle\sum}
}}\left(  \alpha_{i}\left\vert w_{x_{i}}\right\vert ^{p_{i}-2}w_{x_{i}%
}\right)  _{x_{i}}=f\left(  x\right)  & \mbox{  in }\Omega\vspace{0.2cm}\\
& \\
w=0 & \mbox{ on }\partial\Omega,
\end{array}
\right.
\]
rearrangement methods allows to obtain a pointwise comparison result for $w$
(see \cite{cianchi anisotropo}). Namely,%
\begin{equation}
w^{\bigstar}\left(  x\right)  \!\leq\,z\left(  x\right)  \text{ \ \ \ \ \ for
\textit{a.e.} \ }\Omega^{\bigstar}\!\!\vspace{0.2cm}, \label{comparison}%
\end{equation}
where $\Omega^{\bigstar}$ is the ball centered in the origin such that
$|\Omega^{\bigstar}|=|\Omega|$, $w^{\bigstar}$ is the symmetric rearrangement
of a solution $w$ to problem (\ref{intro}) and $z$ is the radial solution to
the following isotropic problem
\begin{equation}
\!\!\left\{  \!\!%
\begin{array}
[c]{ll}%
\!\!-\operatorname{div}\left(  \Lambda\left\vert \nabla z\right\vert
^{\overline{p}-2}\nabla v\right)  \!=\!\!f^{\bigstar}\left(  x\right)  \! &
\mbox{  in  }\Omega^{\bigstar}\!\!\vspace{0.2cm}\\
& \\
\!\!z=0 & \!\mbox{ on }\partial\Omega^{\bigstar}\!\!,
\end{array}
\!\!\right.  \label{Prob simm pi}%
\end{equation}
with $\Lambda$ a suitable positive constant, $\overline{p}$ the harmonic mean
of exponents $p_{1},\dots,p_{N}$ and $f^{\bigstar}$ the symmetric decreasing
rearrangement of $f.$

In the parabolic setting, the pointwise comparison (\ref{comparison}) need not
hold, nevertheless it is possible to prove for fixed $t\in\left(  0.T\right)
,$ the following integral comparison result
\begin{equation}
\int_{0}^{s}u^{\ast}(\sigma,t)\;d\sigma\leq\int_{0}^{s}v^{\ast}(\sigma
,t)\;d\sigma\qquad\hbox{in $(0, |\Omega|)$,} \label{integral_into}%
\end{equation}
where$\ u^{\ast}$ and $v^{\ast}$\ are the decreasing rearrangement with
respect to the space variable of the solution $u$ to problem (\ref{intro}) and
of the solution $v$ to the following isotropic "symmetrized" problem
\begin{equation}
\left\{
\begin{array}
[c]{ll}%
v_{t}-\mathrm{{div}}\left(  \Lambda|\nabla v|^{\bar{p}-2}\nabla v\right)
=f^{\bigstar}(x,t) & \quad
\hbox{in $Q^{\bigstar}_T:= \Omega^{\bigstar}\times [0, T]$}\\
v(x,0)=u_{0}^{\bigstar}(x) & \quad\hbox{in $\Omega^{\bigstar}$}\\
v(x,t)=0 & \quad\hbox{on $\partial \Omega\times [0, T],$}
\end{array}
\right.  \label{symm_Pr}%
\end{equation}
respectively. We stress that in contrast to the isotropic case not only the
space domain and the data of problem (\ref{symm_Pr}) are symmetrized with
respect to the space variable, but also the ellipticity condition is subject
to an appropriate symmetrization. Indeed the diffusion operator in problem
(\ref{symm_Pr}) is the isotropic $\overline{p}-$Laplacian.

In order to obtain the integral comparison result (\ref{integral_into}) we
will use the method of semi-discretization in time. This approach was firstly
used by (\cite{V}) and (\cite{ALT1}) and consists into approximating the
solution of a parabolic problem with a sequence of solutions to elliptic
problems with zero-order terms. For this reason, we first prove an integral
comparison result for such elliptic problems and then, passing to the limit,
we obtain (\ref{integral_into}). We emphasize that integral comparison
(\ref{integral_into}) implies a priori estimates for any Lorentz norm of
$u\left(  \cdot,t\right)  $ in terms of the same norm of $v\left(
\cdot,t\right)  $ for any fixed $t>0$. Moreover, we study the asymptotic
behavior of solution $u\left(  \cdot,t\right)  $ as the time variable $t$ goes
to infinity. The paper is organized as follows. In Section 2 we recall some
backgrounds on the anisotropic spaces and on the properties of symmetrization.
In Section 3 we prove a integral comparison result for elliptic anisotropic
problems and the main results.

\section{Preliminaries}

\subsection{Anisotropic spaces}

Let $\Omega$ be an open, bounded subset of $\mathbb{R}^{N}$ with Lipschitz
continuous boundary, $N\geq2$, and let $1\leq p_{1},\ldots,p_{N}<\infty$ be
$N$ real numbers. We define the anisotropic Sobolev space $W_{0}^{1,p_{i}%
}(\Omega)$ as the closure of $C_{0}^{\infty}(\Omega)$ with respect to the
norm
\[
\left\Vert u\right\Vert _{W_{0}^{1,p_{i}}(\Omega)}=\left\Vert u\right\Vert
_{L^{1}(\Omega)}+\left\Vert \partial_{x_{i}}u\right\Vert _{L^{p_{i}}(\Omega
)}.
\]
In this anisotropic setting, a \textit{Poincar\'{e}-type inequality} holds
(see \cite{FGK}). If $u\in W_{0}^{1,p_{i}}(\Omega),$ for every $q\geq1$ there
exists a constant $C$, depending on $\left\vert \Omega\right\vert $ and $q$,
such that
\begin{equation}
\left\Vert u\right\Vert _{L^{q}(\Omega)}\leq\ C\;\left\Vert \partial_{x_{i}%
}u\right\Vert _{L^{q}(\Omega)}. \label{dis poincare}%
\end{equation}
We set $W_{0}^{1,\overrightarrow{p}}(\Omega)=\displaystyle\bigcap_{i=1}%
^{N}W_{0}^{1,p_{i}}(\Omega)$ with the norm
\begin{equation}
\left\Vert u\right\Vert _{W_{0}^{1,\overrightarrow{p}}(\Omega)}=\overset
{N}{\underset{i=1}{\sum}}\left\Vert \partial_{x_{i}}u\right\Vert _{L^{p_{i}%
}(\Omega)} \label{Sob_norm}%
\end{equation}
and we denote its dual by $\left(  W_{0}^{1,\overrightarrow{p}}(\Omega
)\right)  ^{\prime}.$

Moreover we put \ $L^{\overrightarrow{p}}\left(  0,T;W_{0}^{1,\overrightarrow
{p}}(\Omega)\right)  =\displaystyle\bigcap_{i=1}^{N}L^{p_{i}}\left(
0,T;W_{0}^{1,p_{i}}(\Omega)\right)  $ equipped with the following norm
\begin{equation}
\Vert u\Vert_{L^{\overrightarrow{p}}\left(  0,T;W_{0}^{1,\overrightarrow{p}%
}(\Omega)\right)  }=\sum_{i=1}^{N}\left(  \int_{0}^{T}\left\Vert u_{x_{i}%
}\right\Vert _{L^{p_{i}}(\Omega)}^{p_{i}}dt\right)  ^{\frac{1}{p_{i}}}.
\label{norm}%
\end{equation}
On denoting by $\bar{p}$ the \textit{harmonic mean} of $p_{1},\ldots,p_{N}$,
\emph{i.e.}%

\begin{equation}
\frac{1}{\bar{p}}=\frac{1}{N}\overset{N}{\underset{i=1}{\sum}}\frac{1}{p_{i}},
\label{p barrato bis}%
\end{equation}
a Sobolev-type inequality tells us that whenever $u$ belongs to $W_{0}%
^{1,\overrightarrow{p}}(\Omega)$, there exists a constant $C_{S}$ such that
\begin{equation}
\left\Vert u\right\Vert _{L^{q}(\Omega)}\leq C_{S}\;\overset{N}{\underset
{i=1}{\sum}}\left\Vert \partial_{x_{i}}u\right\Vert _{L^{p_{i}}(\Omega)}
\label{sobolev}%
\end{equation}
where $q=\overline{p}^{\ast}=\frac{N\overline{p}}{N-\overline{p}}$ if
$\overline{p}<N$ or $q\in\left[  1,+\infty\right[  $ if $\overline{p}\geq N$
(see \cite{troisi}). If in plus $\bar{p}<N$, inequality (\ref{sobolev})
implies the continuous embedding of the space $W_{0}^{1,\overrightarrow{p}%
}(\Omega)$ into $L^{q}(\Omega)$ for every $q\in\lbrack1,\bar{p}^{\ast}]$. On
the other hand, the continuity of the embedding $W_{0}^{1,\overrightarrow{p}%
}(\Omega)\subset L^{p_{+}}(\Omega)$ with $p_{+}:=\max\{p_{1},\ldots,p_{N}\}$
relies on inequality (\ref{dis poincare}). It may happen that $\bar{p}^{\ast
}<p_{+}$ if the exponents $p_{i}$ are not closed enough. Then $p_{\infty
}:=\max\{\bar{p}^{\ast},p_{+}\}$ turns out to be the critical exponent in the
anisotropic Sobolev embedding.

\noindent

\bigskip

\subsection{Symmetrization}

A precise statement of our results requires the use of classical notions of
rearrangement and of suitable symmetrization of a Young function, introduced
by Klimov in \cite{Klimov 74}. \newline Let $u$ be a measurable function
(continued by $0$ outside its domain) fulfilling
\begin{equation}
\left\vert \{x\in\mathbb{R}^{N}:\left\vert u(x)\right\vert >t\}\right\vert
<+\infty\text{ \ \ \ for every }t>0. \label{insieme livello di misura finita}%
\end{equation}
The \textit{symmetric decreasing rearrangement} of $u$ is the function
$u^{\bigstar}:\mathbb{R}^{N}\rightarrow\left[  0,+\infty\right[  $
$\ $satisfying
\begin{equation}
\{x\in\mathbb{R}^{N}:u^{\bigstar}(x)>t\}=\{x\in\mathbb{R}^{N}:\left\vert
u(x)\right\vert >t\}^{\bigstar}\text{ \ for }t>0. \label{livello palla}%
\end{equation}
The \textit{decreasing rearrangement} $u^{\ast}$ of $u$ is defined as
\[
u^{\ast}(s)=\sup\{t>0:\mu_{u}(t)>s\}\text{ \ for }s\geq0,
\]
where
\[
\mu_{u}(t)=\left\vert \{x\in{\Omega}:\left\vert u(x)\right\vert
>t\}\right\vert \text{ \ \ \ \ for }t\geq0
\]
denotes the \textit{distribution function} of $u$. \newline Moreover
\[
u^{\bigstar}(x)=u^{\ast}(\omega_{N}\left\vert x\right\vert ^{N})\text{
\ \ }\hbox{\rm for a.e.}\;x\in{{\mathbb{R}}^{N}.}%
\]
Analogously, we define the \textit{symmetric increasing rearrangement}
$u_{\bigstar}$ on replacing \textquotedblleft$>$\textquotedblright\ by
\textquotedblleft$<$\textquotedblright\ in the definitions of the sets in
(\ref{insieme livello di misura finita}) and (\ref{livello palla}). Moreover,
we set
\[
u^{\ast\ast}(s)=\frac{1}{s}\int_{0}^{s}u^{\ast}(r)\;dr\qquad
\hbox{\rm for}\;s>0.
\]
We refer to \cite{BS} for details on these topics.

We just recall the following property of rearrangements which will be useful
in the following (see for example \cite{ALT1}):

\begin{lemma}
\label{riordsum} If $f,g$ are measurable functions defined in $\Omega$, then
\[
\int_{0}^{r}(f+g)^{*}(s)\,ds\leq\int_{0}^{r} f^{*}(s)+g^{*}(s)\,ds,\quad
\forall\, r\in[0,|\Omega|].
\]

\end{lemma}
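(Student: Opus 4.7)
The plan is to exploit the well-known variational characterization of the integral of the decreasing rearrangement, namely
\begin{equation*}
\int_0^r u^{\ast}(s)\,ds \;=\; \sup\!\left\{ \int_E |u(x)|\,dx \,:\, E\subset\Omega,\ |E|=r\right\},
\end{equation*}
valid for every measurable $u$ on $\Omega$ and every $r\in[0,|\Omega|]$. This identity is already contained in the reference \cite{BS} cited right before the statement, so I would just invoke it rather than reprove it.

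Granting this characterization, the lemma is essentially a one-line consequence of the triangle inequality. More precisely, I would fix $r\in[0,|\Omega|]$ and, for an arbitrary measurable set $E\subset\Omega$ with $|E|=r$, estimate
\begin{equation*}
\int_E |f(x)+g(x)|\,dx \;\leq\; \int_E |f(x)|\,dx + \int_E |g(x)|\,dx \;\leq\; \int_0^r f^{\ast}(s)\,ds + \int_0^r g^{\ast}(s)\,ds,
\end{equation*}
where the second inequality applies the variational characterization to $f$ and to $g$ separately on the same set $E$. Taking the supremum over all admissible $E$ on the left-hand side and using the characterization once more, this time for $f+g$, yields exactly
\begin{equation*}
\int_0^r (f+g)^{\ast}(s)\,ds \;\leq\; \int_0^r f^{\ast}(s)\,ds + \int_0^r g^{\ast}(s)\,ds,
\end{equation*}
which is the claim (note the statement can equivalently be written with $f^{\ast}+g^{\ast}$ inside the single integral, as in the excerpt).

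There is no real obstacle here; the only subtlety is that the supremum in the variational characterization is genuinely attained (for nonatomic $\Omega$) by the superlevel set $\{|u|>u^{\ast}(r)\}$ completed by a subset of $\{|u|=u^{\ast}(r)\}$ of the right measure, which is standard. If one prefers a self-contained route that bypasses the variational formula, an alternative would be to invoke the Hardy--Littlewood--Pólya majorization: $h\prec k$ in the sense $\int_0^r h^{\ast}\leq \int_0^r k^{\ast}$ for all $r$ is equivalent to $\int \Phi(|h|)\leq \int \Phi(|k|)$ for every convex nondecreasing $\Phi$, together with subadditivity of $u\mapsto \int_0^r u^{\ast}$; however the direct argument above is shorter and matches the style of \cite{ALT1} cited as the source.
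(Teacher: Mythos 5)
Your argument is correct, and it is the standard proof of this subadditivity property: the variational identity
\begin{equation*}
\int_0^r u^{\ast}(s)\,ds = \sup\Bigl\{\textstyle\int_E |u|\,dx : E\subset\Omega,\ |E|=r\Bigr\}
\end{equation*}
(valid here since Lebesgue measure on $\Omega$ is nonatomic, so the supremum is attained) combined with the pointwise triangle inequality gives the claim immediately. One remark worth making is that the paper does not actually prove Lemma \ref{riordsum} at all; it explicitly presents it as a recalled property and refers the reader to \cite{ALT1} (and implicitly \cite{BS}). So there is no in-paper proof to compare against, but your argument is precisely the one found in those references, and it is complete: the only implicit ingredients are the Hardy--Littlewood inequality $\int_E |h|\,dx \le \int_0^{|E|} h^{\ast}\,ds$ (used for $h=f$ and $h=g$) and the attainment of the supremum for $h=f+g$, both of which you correctly flag as standard for the nonatomic case.
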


\bigskip

\bigskip

In this paper we will consider an $N-$\textit{dimensional Young function}
(namely an even convex function such that $\Phi\left(  0\right)  =0$ \ \ and
$\underset{\left\vert \xi\right\vert \rightarrow+\infty}{\lim}\Phi\left(
\xi\right)  $ $=+\infty)$ of the following type:%
\begin{equation}
\Phi\left(  \xi\right)  =\underset{i=1}{\overset{N}{%
{\displaystyle\sum}
}}\alpha_{i}\left\vert \xi_{i}\right\vert ^{p_{i}}\text{ \ for }\xi
\in\mathbb{R}^{N}\text{ with }\alpha_{i}>0\text{ for }i=1,...,N. \label{Fi pi}%
\end{equation}

We denote by $\Phi_{\blacklozenge}:\mathbb{R\rightarrow}\left[  0,+\infty
\right[  $ the symmetrization of $\Phi$ introduced in \cite{Klimov 74}. It is
the one-dimensional Young function fulfilling%
\begin{equation}
\Phi_{\blacklozenge}(\left\vert \xi\right\vert )=\Phi_{\bullet\bigstar\bullet
}\left(  \xi\right)  \text{ \ for }\xi\in\mathbb{R}^{N}, \label{def fi rombo}%
\end{equation}
where $\Phi_{\bullet}$ is the Young conjugate function of $\Phi$ given by%
\[
\Phi_{\bullet}\left(  \xi^{\prime}\right)  =\sup\left\{  \xi\cdot\xi^{\prime
}-\Phi\left(  \xi\right)  :\xi\in\mathbb{R}^{N}\right\}  \text{ \ \ for \ }%
\xi^{\prime}\in\mathbb{R}^{N}.
\]
So $\Phi_{\blacklozenge}$ is the composition of Young conjugation, symmetric
increasing rearrangement and Young conjugate again. Easy calculations show
(see \textit{e.g. }\cite{cianchi anisotropo}), that
\begin{equation}
\Phi_{\blacklozenge}(\left\vert \xi\right\vert )=\Lambda\left\vert
\xi\right\vert ^{\overline{p}}, \label{fi rombo}%
\end{equation}
where $\overline{p}$ is the harmonic mean of exponents $p_{1},\ldots,p_{N}$
defined in (\ref{p barrato bis}) and
\begin{equation}
\Lambda=\frac{2^{\overline{p}}\left(  \overline{p}-1\right)  ^{\overline{p}%
-1}}{\overline{p}^{\overline{p}}}\left[  \frac{\underset{i=1}{\overset{N}{\Pi
}}p_{i}^{\frac{1}{p_{i}}}\left(  p_{i}^{\prime}\right)  ^{\frac{1}%
{p_{i}^{\prime}}}\Gamma(1+1/p_{i}^{\prime})}{\omega_{N}\Gamma(1+N/\overline
{p}^{\prime})}\right]  ^{\frac{\overline{p}}{N}}\left(  \underset
{i=1}{\overset{N}{\Pi}}\alpha_{i}^{\frac{1}{p_{i}}}\right)  ^{\frac
{\overline{p}}{N}} \label{lapda}%
\end{equation}
with $\omega_{N}$ the measure of the $N-$dimensional unit ball, $\Gamma$ the
Gamma function and $p_{i}^{\prime}=\frac{p_{i}}{p_{i}-1}$ with the usual
conventions if $p_{i}=1$. \newline We remember that in the anisotropic setting
a \textit{Polya-Szeg\"{o} principle} holds (see \cite{cianchi anisotropo}).
Let $u$ be a weakly differentiable function in $%
\mathbb{R}
^{N}$ satisfying (\ref{insieme livello di misura finita}) and such that
$\underset{i=1}{\overset{N}{%
{\displaystyle\sum}
}}\alpha_{i}%
{\displaystyle\int_{\mathbb{R}^{N}}}
\left\vert \frac{\partial u}{\partial x_{i}}\right\vert ^{p_{i}}dx<+\infty,$
then $u^{\bigstar}$ is weakly differentiable in $%
\mathbb{R}
^{N}$ and
\begin{equation}
\Lambda\int_{\mathbb{R}^{N}}\left\vert \nabla u^{\bigstar}\right\vert
^{\overline{p}}dx\leq\underset{i=1}{\overset{N}{%
{\displaystyle\sum}
}}\alpha_{i}\int_{\mathbb{R}^{N}}\left\vert \frac{\partial u}{\partial x_{i}%
}\right\vert ^{p_{i}}dx\text{ .\ } \label{polya}%
\end{equation}

\bigskip

\section{\textbf{Main results}}


\label{sec3}

We deal with a class of nonlinear parabolic problems subject to general growth
conditions and having the form
\begin{equation}
\left\{
\begin{array}
[c]{lll}%
u_{t}-\operatorname{div}(a(x,t,u,\nabla u))=f(x,t) & \quad\text{in }%
Q_{T}:=\Omega\times\left(  0,T\right)  & \\
u(x,0)=u_{0}(x) & \quad\text{in}\;\Omega & \\
u(x,t)=0 & \quad\text{on}\;\partial\Omega\times\left(  0,T\right)  , &
\end{array}
\right.  \label{prob_parab}%
\end{equation}
where $\Omega$ is an open, bounded subset of $\mathbb{R}^{N}$ with Lipschitz
continuous boundary, $N\geq2,$ $a:Q_{T}\times\mathbb{R}\times\mathbb{R}%
^{N}\rightarrow\mathbb{R}^{N}$ is a Carath\'{e}odory function such that, for
\textit{a.e.} $(x,t)\in Q_{T}$, for all $s\in\mathbb{R}$ and for all $\xi
,\xi^{\prime}\in{{\mathbb{R}}^{N}}$,

\begin{itemize}
\item[(H1)] { $a(x,t,s,\xi)\cdot\xi\geq\overset{N}{\underset{i=1}{\sum}}%
\alpha_{i}\left\vert \xi_{i}\right\vert ^{p_{i}}\quad\hbox{with
$\alpha_{i}>0$};$}

\item[(H2)] { $\left\vert a_{j}(x,t,s,\xi)\right\vert \leq\beta{{}}\left[
|s|^{\bar{p}/p_{j}^{\prime}}+\left\vert \xi_{j}\right\vert ^{p_{j}-1}\right]
\quad\hbox{ with $\beta>0 \quad \forall j=1, \ldots,
N$};$}

\item[(H3)] { $\left\vert a_{j}(x,t,s,\xi)-a_{j}(x,t,s^{\prime},\xi
)\right\vert \leq\gamma\left\vert \xi_{j}\right\vert ^{p_{j}-1}|s-s^{\prime}|$
\quad\hbox{with $\gamma>0 \quad \forall j=1,
\ldots, N$};}

\item[(H4)] { $\left(  a(x,t,s,\xi)-a(x,t,s,\xi^{\prime})\right)  \cdot\left(
\xi-\xi^{\prime}\right)  >0$ }$\quad${with }$\xi\neq\xi^{\prime}.${$\qquad$
}
\end{itemize}

Moreover, we assume that

\begin{itemize}
\item[(H5)] { $f\in\overset{N}{\underset{i=1}{\sum}}L^{{p_{i}^{\prime}}%
}(0,T,W^{-1,p_{i}^{\prime}}(\Omega)+L^{2}(\Omega))\qquad\hbox{and}\qquad
u_{0}\in L^{2}(\Omega).$}
\end{itemize}

Here, $1\leq p_{1},\ldots,p_{N}<\infty$ and ${{\bar{p}}}$ denotes the harmonic
mean of $p_{1},\ldots,p_{N},$ defined in (\ref{p barrato bis}), such that
$\overline{p}>1$.

\begin{definition}
We say that a function $u\in L^{\overrightarrow{p}}\left(  0,T;W_{0}%
^{1,\overrightarrow{p}}(\Omega)\right)  \cap\,C\left(  0,T;L^{2}%
(\Omega)\right)  $ is a weak solution to problem (\ref{prob_parab}) if for all
$t\in\left(  0,T\right)  $
\begin{align}
\int_{\Omega}u(x,t)\varphi(x,t)\;dx  &  +\int_{0}^{t}\int_{\Omega}%
(-u(x,\tau)\varphi_{\tau}(x,\tau)+a(x,\tau,u,\nabla u)\cdot\nabla
\varphi(x,\tau))\;dx\;d\tau\label{def_sol}\\
&  =\int_{\Omega}u_{0}(x)\varphi(x,0)\;dx+\int_{0}^{t}\int_{\Omega}%
f(x,\tau)\varphi(x,\tau)\;dx\;d\tau\nonumber
\end{align}
for any $\varphi\in W^{1,2}(0,T;L^{2}(\Omega))\cap L^{\overrightarrow{p}%
}(0,T;W_{0}^{1,\overrightarrow{p}}(\Omega)).$
\end{definition}

Since $A(u)=-\operatorname{div}(a(x,t,u,\nabla u))$ is a pseudomonotone and
coercive operator acting between $L^{\overrightarrow{p}}(0,T;W_{0}%
^{1,\overrightarrow{p}}(\Omega)\cap L^{2}(\Omega))$ and $\overset{N}%
{\underset{i=1}{\sum}}L^{{p_{i}^{\prime}}}(0,T,W^{-1,p_{i}^{\prime}}%
(\Omega)+L^{2}(\Omega)),$ it is well-known (see \cite{Ls} and
\cite{antontsev-chipot-08}) that there exists a unique weak solution to
problem (\ref{prob_parab}).

\noindent Our aim is to obtain a comparison between concentrations of the
solution $u$ to problem (\ref{prob_parab}) and the solution $v$ to problem
(\ref{symm_Pr}), which has a unique weak solution $v\in L^{\bar{p}}\left(
0,T;W_{0}^{1,\bar{p}}(\Omega)\right)  \cap C\left(  0,T;L^{2}(\Omega)\right)
$.

In this section we adopt the following convention: if $h(x,t)$ is defined in
$Q_{T},$ we denote by $h^{\ast}(\sigma,t)$ the decreasing rearrangement of $h$
with respect to $x$ for $t$ fixed.

\bigskip

\begin{theorem}
\label{Th parabolico}Assume that (H1)-(H5) hold. Let $u$ be the weak solution
to problem \eqref{prob_parab} and $v$ be the solution to problem
\eqref{symm_Pr}, then we have%
\begin{equation}
\int_{0}^{s}u^{\ast}(\sigma,t)\;d\sigma\leq\int_{0}^{s}v^{\ast}(\sigma
,t)\;d\sigma\qquad\hbox{$x \in (0, |\Omega|)$  for a.e. $t \in
(0,T)$.} \label{e:confr}%
\end{equation}

\end{theorem}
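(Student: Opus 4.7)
The plan is to carry out the semi-discretization-in-time scheme announced by the authors, following Vázquez and Alvino--Lions--Trombetti. Fix $n\in\mathbb N$, set $h=T/n$ and $t_k=kh$, and define $f_k(x)=\frac1h\int_{t_{k-1}}^{t_k}f(x,\tau)\,d\tau$. I would inductively solve the sequence of anisotropic elliptic problems
\begin{equation*}
\frac{u_k-u_{k-1}}{h}-\operatorname{div}\bigl(a(x,t_k,u_k,\nabla u_k)\bigr)=f_k(x) \quad\text{in }\Omega,\qquad u_k=0 \text{ on }\partial\Omega,
\end{equation*}
starting from $u_0$; equivalently the equation has the form $-\operatorname{div}(a)+\tfrac{1}{h}u_k=f_k+\tfrac{1}{h}u_{k-1}$, so that at each step one is dealing with an elliptic problem with a zero-order term, exactly the class for which the authors develop the elliptic integral comparison result earlier in Section~3. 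The symmetrized problem (\ref{symm_Pr}) is discretized in the same way, producing a radial family $v_k\in W_0^{1,\bar p}(\Omega^{\bigstar})$ solving $-\operatorname{div}(\Lambda|\nabla v_k|^{\bar p-2}\nabla v_k)+\tfrac1h v_k=f_k^{\bigstar}+\tfrac1h v_{k-1}$.

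The core of the argument is an induction on $k$ establishing
\begin{equation*}
\int_0^s u_k^{\ast}(\sigma)\,d\sigma\;\leq\;\int_0^s v_k^{\ast}(\sigma)\,d\sigma\qquad\text{for all }s\in(0,|\Omega|).
\end{equation*}
The case $k=0$ is the equality of rearrangements of $u_0$ and $u_0^{\bigstar}$. For the inductive step I would feed $u_k$ and $v_k$ into the elliptic comparison theorem with zero-order coefficient $\tfrac1h$ and data $f_k+\tfrac1h u_{k-1}$ resp.\ $f_k^{\bigstar}+\tfrac1h v_{k-1}$; since the datum on the right-hand side of the anisotropic problem is not radial, the needed integral domination is obtained by combining Lemma~\ref{riordsum} (subadditivity of rearrangement under integration) with the inductive hypothesis and the fact that $v_{k-1}=v_{k-1}^{\bigstar}$, namely
\begin{equation*}
\int_0^s\Bigl(f_k+\tfrac1h u_{k-1}\Bigr)^{\!\ast}d\sigma\leq \int_0^s f_k^{\ast}d\sigma+\tfrac1h\!\int_0^s u_{k-1}^{\ast}d\sigma\leq \int_0^s f_k^{\bigstar\,\ast}d\sigma+\tfrac1h\!\int_0^s v_{k-1}^{\ast}d\sigma.
\end{equation*}
The elliptic comparison (whose proof rests on the Pólya--Szegő principle (\ref{polya}) together with the standard truncation-and-isoperimetric machinery applied to $u_k$) then transfers this to the desired control of $u_k^{\ast}$ by $v_k^{\ast}$.

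Finally, I would let $h\to0$. The coercivity hypothesis (H1) and test-function $u_k$ give uniform bounds for the piecewise-constant interpolants $u_h$ in $L^{\infty}(0,T;L^2(\Omega))\cap L^{\vec p}(0,T;W_0^{1,\vec p}(\Omega))$, and an Aubin--Lions compactness argument combined with uniqueness of the weak solution to (\ref{prob_parab}) yields $u_h(\cdot,t)\to u(\cdot,t)$ in $L^1(\Omega)$ for a.e.\ $t$; the analogous convergence holds for $v_h\to v$. Since the functional $w\mapsto\int_0^s w^{\ast}(\sigma)\,d\sigma$ is continuous for the $L^1$-topology, passing to the limit in the discrete inequality gives (\ref{e:confr}). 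The main obstacle I anticipate is the inductive elliptic step: the zero-order forcing $\tfrac1h u_{k-1}$ is neither radial nor comparable pointwise to $\tfrac1h v_{k-1}$, so the comparison must be closed purely at the level of integrated rearrangements, and one must be careful that the elliptic comparison theorem applies in a form that consumes the right-hand side in this weaker sense. The time-compactness needed for the limit is a secondary but nontrivial point, because of the anisotropic character of the spatial gradient bounds.
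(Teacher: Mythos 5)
Your proposal is correct and follows essentially the same path as the paper's proof: semi-discretization in time, an induction on the time step that combines Lemma~\ref{riordsum} with the elliptic comparison result for problems with a zero-order term (here the precise tool is Corollary~\ref{corollario elli dominate}, stated exactly to absorb the non-radial right-hand side $f_k+\tfrac1h u_{k-1}$ via a dominated datum), and a passage to the limit using the uniform a priori bounds. The only notable divergence is cosmetic (the paper allows a non-uniform partition and time-averages the coefficient $a$ over each subinterval, and its limit step is phrased via weak convergences and a citation of Lions rather than your more explicit Aubin--Lions plus $L^1$-continuity of $w\mapsto\int_0^s w^\ast$), but the structure and key lemmas are the same.
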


\bigskip

The following result is a slight extension of Theorem \ref{Th parabolico} when
the datum in problem (\ref{symm_Pr}) is not the rearrangement of datum $f$ of
problem (\ref{prob_sym_zero}), but it is a function that dominates $f$.

\bigskip

\begin{corollary}
\label{corollario parabolico}Assume the same hypothesis of Theorem
\ref{Th parabolico}. Let $u$ be the weak solution to problem
\eqref{prob_parab} and $v$ be the solution to the following problem%
\begin{equation}
\left\{
\begin{array}
[c]{ll}%
v_{t}-\mathrm{{div}}\left(  \Lambda|\nabla v|^{\bar{p}-2}\nabla v\right)
=\widetilde{f}(x,t) & \quad
\hbox{in $Q^{\bigstar}:= \Omega^{\bigstar}\times (0, T)$}\\
v(x,0)=\widetilde{u}_{0}(x) & \quad\hbox{in $\Omega^{\bigstar}$}\\
v(x,t)=0 & \quad\hbox{on $\partial \Omega\times (0, T).$}
\end{array}
\right.  \label{prob_sim_corol}%
\end{equation}
where $\widetilde{f}=\widetilde{f}^{\bigstar}$ and $\widetilde{u}%
_{0}=\widetilde{u}_{0}^{\bigstar}$ are functions such that for \textit{a.e.}
$t\in(0,T)$%
\[
\int_{0}^{s}f^{\ast}(\sigma,t)d\sigma\leq\int_{0}^{s}\widetilde{f}^{\ast
}(\sigma,t)d\sigma\text{ \ \ \ for }s\in\left[  0,\left\vert \Omega\right\vert
\right]
\]
and%
\[
\int_{0}^{s}u_{0}^{\ast}(\sigma)d\sigma\leq\int_{0}^{s}\widetilde{u}_{0}%
^{\ast}(\sigma)d\sigma\text{ \ \ \ for }s\in\left[  0,\left\vert
\Omega\right\vert \right]  ,
\]
respectively. Then we have
\[
\int_{0}^{s}u^{\ast}(\sigma,t)\;d\sigma\leq\int_{0}^{s}v^{\ast}(\sigma
,t)\;d\sigma\qquad\hbox{$x \in (0, |\Omega|)$  for a.e. $t \in
(0,T)$.}
\]

\end{corollary}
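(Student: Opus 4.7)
The strategy is to chain two concentration comparisons by interposing the solution $w$ of the canonically symmetrized problem \eqref{symm_Pr}, whose data are exactly $f^{\bigstar}$ and $u_{0}^{\bigstar}$. Theorem~\ref{Th parabolico} applied to $u$ and $w$ already gives $\int_{0}^{s} u^{\ast}(\sigma,t)\,d\sigma \leq \int_{0}^{s} w^{\ast}(\sigma,t)\,d\sigma$ for all $s\in[0,|\Omega|]$ and a.e.\ $t\in(0,T)$, so the corollary reduces to showing $\int_{0}^{s} w^{\ast}(\sigma,t)\,d\sigma \leq \int_{0}^{s} v^{\ast}(\sigma,t)\,d\sigma$. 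Since $w$ and $v$ solve the same isotropic $\bar{p}$-Laplacian parabolic equation on the same ball $\Omega^{\bigstar}$ with homogeneous Dirichlet data, this is a monotonicity-in-data property for the symmetrized flow: data that are larger in the sense of concentration must produce solutions that are larger in the sense of concentration.

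I would establish this monotonicity by reusing the semi-discretization-in-time scheme that underpins the proof of Theorem~\ref{Th parabolico}. Partition $(0,T)$ with step $h>0$, set $f_{n}(x)=\tfrac{1}{h}\int_{(n-1)h}^{nh} f(x,\tau)\,d\tau$ and $\widetilde{f}_{n}(x)=\tfrac{1}{h}\int_{(n-1)h}^{nh}\widetilde{f}(x,\tau)\,d\tau$, and introduce iterates $w_{n},v_{n}\in W_{0}^{1,\bar{p}}(\Omega^{\bigstar})$ solving
\[
\frac{w_{n}}{h}-\Lambda\operatorname{div}\!\bigl(|\nabla w_{n}|^{\bar{p}-2}\nabla w_{n}\bigr)=f_{n}^{\bigstar}+\frac{w_{n-1}}{h},\qquad \frac{v_{n}}{h}-\Lambda\operatorname{div}\!\bigl(|\nabla v_{n}|^{\bar{p}-2}\nabla v_{n}\bigr)=\widetilde{f}_{n}+\frac{v_{n-1}}{h},
\]
with $w_{0}=u_{0}^{\bigstar}$ and $v_{0}=\widetilde{u}_{0}$. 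I would claim that $\int_{0}^{s} w_{n}^{\ast}\,d\sigma\leq\int_{0}^{s} v_{n}^{\ast}\,d\sigma$ for every $n$, and prove it by induction on $n$. The base case is the hypothesis on the initial data. For the inductive step, Lemma~\ref{riordsum}, the inductive hypothesis, and the hypothesis on $f$ combine to give
\[
\int_{0}^{s}\bigl(f_{n}^{\bigstar}+w_{n-1}/h\bigr)^{\ast}(\sigma)\,d\sigma\leq\int_{0}^{s}\bigl(\widetilde{f}_{n}+v_{n-1}/h\bigr)^{\ast}(\sigma)\,d\sigma,
\]
and then the elliptic integral comparison established in Section~\ref{sec3}, in its version for the isotropic $\bar{p}$-Laplacian with a zero-order term, propagates this to $w_{n}$ and $v_{n}$.

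Letting $h\to 0$ along the piecewise-constant-in-time interpolants of $\{w_{n}\}$ and $\{v_{n}\}$, via the same compactness and convergence arguments already used in the proof of Theorem~\ref{Th parabolico}, transfers the concentration inequality from the iterates to $w$ and $v$ themselves; chaining with the first step yields the corollary. The main obstacle is that the elliptic comparison in Section~\ref{sec3} is formulated with the datum of the symmetrized problem being \emph{exactly} the rearrangement of the datum of the other problem, whereas the inductive step above requires the slightly more flexible formulation in which the radial datum of the symmetrized problem only concentration-dominates the rearrangement of the other. This flexibility is however essentially built into the proof of that elliptic comparison: the differential inequality derived there for $\int_{0}^{s} w_{n}^{\ast}(\sigma)\,d\sigma$ is monotone in its right-hand side, so any radial datum that concentration-dominates $f_{n}^{\bigstar}$ may be substituted without altering the argument, and this is precisely what is needed to close the induction.
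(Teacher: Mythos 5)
Your proposal is correct, but it takes a genuinely different route from the one the paper intends. The paper states the corollary with the preamble ``slight extension of Theorem \ref{Th parabolico}'' and, by analogy with the elliptic Corollary \ref{corollario elli dominate} (whose one-line proof simply reruns the argument of Theorem \ref{ellipticcomp} with $\mathcal G$ built from $\widetilde g^{\ast}$), the intended proof is a direct one-pass modification of the proof of Theorem \ref{Th parabolico}: keep the anisotropic discretization \eqref{discr_pr} for $u$, but replace the data of the symmetrized discretization \eqref{discr_symm_Pr} by $\widetilde f^{m}$ and $\widetilde u_{0}$; the induction in Step~2 closes via Lemma \ref{riordsum} and Corollary \ref{corollario elli dominate} exactly as before. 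You instead factor the comparison through the intermediate solution $w$ of \eqref{symm_Pr}: Theorem \ref{Th parabolico} handles $u\lesssim w$ as a black box, and a separate monotonicity-in-data result for the radial isotropic $\bar p$-parabolic flow handles $w\lesssim v$. What the direct route buys is economy (one semi-discretization, one induction); what your decomposition buys is modularity and a cleaner statement of the underlying monotonicity principle. Two points are worth making explicit. First, the isotropic-to-isotropic elliptic comparison you invoke in your second step is not literally Theorem \ref{ellipticcomp}/Corollary \ref{corollario elli dominate} (which compare an anisotropic to an isotropic problem), but it is a degenerate special case: the standard Polya--Szeg\"o inequality replaces \eqref{polya}, and since all iterates $w_{n},v_{n}$ are radially decreasing (radial decreasing data on $\Omega^{\bigstar}$, unique radial solution), both satisfy the exact ODE \eqref{E:11} with ordered right-hand sides, so the final contradiction argument applies verbatim. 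Second, both your approach and the paper's need a continuous analogue of Lemma \ref{riordsum} to convert the pointwise-in-$t$ concentration hypothesis on $f$ versus $\widetilde f$ into a concentration bound between the time averages $f^{m}$ and $\widetilde f^{m}$; you use it implicitly in your displayed inductive inequality, and it is worth flagging.
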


\bigskip

Using Corollary \ref{corollario parabolico} it is possible to prove the
following estimates of the solution $u\left(  \cdot,t\right)  $ to problem
(\ref{prob_parab}) in term of the solution $v\left(  \cdot,t\right)  $ to
problem (\ref{prob_sim_corol}).

\begin{corollary}
\label{Corollario}Assume the same hypothesis \ref{corollario parabolico}. If
$u$ is the weak solution to problem (\ref{prob_parab}) and $v$ is the solution
to problem (\ref{prob_sim_corol}), then we have
\[
\left\Vert u\left(  \cdot,t\right)  \right\Vert _{L^{p,q}(\Omega)}%
\leq\left\Vert v\left(  \cdot,t\right)  \right\Vert _{L^{p,q}(\Omega
^{\bigstar})}\text{ \ for }t>0,
\]
where $1\leq p<\infty,1\leq q\leq\infty$ and
\[
\left\Vert h\right\Vert _{L^{p,q}(\Omega)}=\left\{
\begin{array}
[c]{ccc}%
\left[  {\displaystyle\int_{0}^{\left\vert \Omega\right\vert }}\left(
s^{\frac{1}{p}}\,h^{\ast\ast}(s)\right)  ^{q}\;\displaystyle\frac{ds}%
{s}\right]  ^{\frac{1}{q}} & \qquad
\hbox{if $1\leq p<+\infty,\, 1\leq q<\infty$} & \\
\displaystyle\sup_{s\in\left(  0,\left\vert \Omega\right\vert \right)
}s^{\frac{1}{p}}\,h^{\ast\ast}(s) & \qquad\hbox{if  $1\leq p <+\infty,\,
q=\infty$}. &
\end{array}
\right.
\]

\end{corollary}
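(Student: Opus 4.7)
The plan is to deduce this directly from Corollary \ref{corollario parabolico}, using the fact that the Lorentz norm is expressed entirely through the maximal rearrangement $h^{\ast\ast}$, which in turn is controlled by integrals of $h^{\ast}$.

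First I would observe that the integral comparison
\[
\int_{0}^{s}u^{\ast}(\sigma,t)\,d\sigma \leq \int_{0}^{s}v^{\ast}(\sigma,t)\,d\sigma, \qquad s\in(0,|\Omega|),
\]
provided by Corollary \ref{corollario parabolico}, together with the equality $|\Omega|=|\Omega^{\bigstar}|$, can be rewritten on dividing by $s$ as the pointwise inequality
\[
u^{\ast\ast}(s,t) \leq v^{\ast\ast}(s,t) \qquad \text{for a.e. } t\in(0,T) \text{ and every } s\in(0,|\Omega|).
\]
This is the crucial reduction: even though the pointwise comparison of $u^{\ast}$ and $v^{\ast}$ may fail, the corresponding comparison between the averaged rearrangements $u^{\ast\ast}$ and $v^{\ast\ast}$ does hold.

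Next I would simply substitute this inequality into the definition of the Lorentz norm. Since the integrand $s^{1/p}h^{\ast\ast}(s)$ depends monotonically on $h^{\ast\ast}(s)$, the case $1\leq q<\infty$ follows from
\[
\left[\int_{0}^{|\Omega|}\left(s^{1/p}u^{\ast\ast}(s,t)\right)^{q}\frac{ds}{s}\right]^{1/q} \leq \left[\int_{0}^{|\Omega^{\bigstar}|}\left(s^{1/p}v^{\ast\ast}(s,t)\right)^{q}\frac{ds}{s}\right]^{1/q},
\]
while the case $q=\infty$ follows from taking the supremum over $s\in(0,|\Omega|)=(0,|\Omega^{\bigstar}|)$ of $s^{1/p}u^{\ast\ast}(s,t)\leq s^{1/p}v^{\ast\ast}(s,t)$.

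There is no real obstacle here, since the work was already done in proving the integral comparison result. The only minor point worth stating carefully is that the monotonicity of $h\mapsto h^{\ast\ast}$ with respect to the partial order induced by $\int_{0}^{s}h^{\ast}$ is automatic from its definition, so the Lorentz norms respect this order. Thus the corollary is essentially a direct translation of Corollary \ref{corollario parabolico} into the language of Lorentz spaces.
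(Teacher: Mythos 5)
Your proposal is correct and is precisely the argument the paper intends (the paper states Corollary \ref{Corollario} without proof, noting only that it follows from Corollary \ref{corollario parabolico}): the integral comparison is equivalent, after division by $s$, to the pointwise inequality $u^{\ast\ast}(s,t)\leq v^{\ast\ast}(s,t)$, and since the Lorentz norm as defined in the statement is a monotone functional of $h^{\ast\ast}$, the conclusion follows immediately for both $q<\infty$ and $q=\infty$.
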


\bigskip

Let us consider\ a weak solution $u\in L_{loc}^{\overrightarrow{p}}%
(0,+\infty,W_{0}^{1,\overrightarrow{p}})\cap C(0,+\infty,L^{2}(\Omega))$ to
the following problem%
\begin{equation}
\left\{
\begin{array}
[c]{ll}%
u_{t}-\overset{N}{\underset{i=1}{\sum}}\left(  \alpha_{i}\left\vert
\partial_{x_{i}}u\right\vert ^{p_{i}-2}\partial_{x_{i}}u\right)  _{x_{i}%
}=0\quad & \mbox{in}\text{ }\left(  0,+\infty\right)  \times\Omega\\
u(t,x)=0 & \mbox{on}\text{ }\left(  0,+\infty\right)  \times\partial\Omega,\\
u(0,x)=u_{0}(x)\quad & \mbox{in}\text{ }\Omega,
\end{array}
\right.  \label{prob decadi}%
\end{equation}
with $\bar{p}=2.$

As a consequence of Theorem \ref{Th parabolico} we study the asymptotic
behavior of solution $u$ to problem (\ref{prob decadi}) as time variable $t$
goes to infinity. Proceeding as in \cite{Ta3}, it is possible to show that all
the solutions to problem decay exponentially to zero as time goes to infinity.

\bigskip

\begin{corollary}
\label{Corollario2}Assume the same hypothesis of Theorem \ref{Th parabolico}.
If $\lambda$\ is the smallest eigenvalue of the following Sturm-Lionville
problem%
\begin{equation}
\left\{
\begin{array}
[c]{ll}%
-\chi^{\prime\prime}\left(  r\right)  +\frac{n-1}{r}\chi^{\prime}\left(
r\right)  =\lambda\chi\left(  r\right)  \quad & \mbox{in}\text{ }\left(
0,R_{\Omega}\right) \\
\chi^{\prime}(0)=\chi(R_{\Omega})=0 &
\end{array}
\right.  \label{S-lionville}%
\end{equation}
and $u$ is a non-zero solution to problem (\ref{prob decadi}), then we have
\[
\left\Vert u(t,\cdot)\right\Vert _{L^{2}(\Omega)}\leq e^{-\lambda t}\left\Vert
u(0,\cdot)\right\Vert _{L^{2}(\Omega)}\text{ \quad for }t>0.
\]

\end{corollary}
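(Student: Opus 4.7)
My strategy is to reduce the decay question to the linear heat equation on the ball $\Omega^{\bigstar}$ via Theorem \ref{Th parabolico}, and then to exploit the Dirichlet spectrum of the Laplacian on $\Omega^{\bigstar}$, which, thanks to radial symmetry, is governed exactly by the Sturm--Liouville problem (\ref{S-lionville}). First, I would apply Theorem \ref{Th parabolico} to (\ref{prob decadi}) with $f\equiv 0$ and $\bar{p}=2$. The symmetrized problem (\ref{symm_Pr}) then becomes the linear heat equation $v_t - \Lambda\Delta v = 0$ on $Q^{\bigstar}$ with radial initial datum $u_{0}^{\bigstar}$ and zero Dirichlet boundary condition. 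Theorem \ref{Th parabolico} yields
\begin{equation*}
\int_0^s u^{*}(\sigma,t)\,d\sigma \le \int_0^s v^{*}(\sigma,t)\,d\sigma \qquad \forall s\in(0,|\Omega|),\ \text{a.e. } t>0.
\end{equation*}

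Next, by the standard Hardy--Littlewood--P\'olya principle (equivalently, Corollary \ref{Corollario} with $p=q=2$), the concentration inequality above implies the norm comparison $\|u(\cdot,t)\|_{L^{2}(\Omega)} \le \|v(\cdot,t)\|_{L^{2}(\Omega^{\bigstar})}$, while the fact that the symmetric decreasing rearrangement is an isometry on $L^{2}$ gives $\|v(\cdot,0)\|_{L^{2}(\Omega^{\bigstar})} = \|u_{0}\|_{L^{2}(\Omega)}$. It therefore remains to establish the exponential decay $\|v(\cdot,t)\|_{L^{2}(\Omega^{\bigstar})} \le e^{-\lambda t} \|v(\cdot,0)\|_{L^{2}(\Omega^{\bigstar})}$.

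To do this I would observe that, since the initial datum $u_{0}^{\bigstar}$ is radial and the symmetrized problem is rotation--invariant, $v(\cdot,t)$ remains radial for every $t$; writing $v(x,t) = V(|x|,t)$ converts $-\Delta v$ precisely into the radial operator appearing in (\ref{S-lionville}). The first Dirichlet eigenfunction of $-\Delta$ on the ball is known to be radial, so the smallest eigenvalue $\lambda$ of (\ref{S-lionville}) is the optimal constant in the Poincar\'e inequality
\begin{equation*}
\lambda \int_{\Omega^{\bigstar}} w^{2}\,dx \le \int_{\Omega^{\bigstar}} |\nabla w|^{2}\,dx \qquad \forall\, w\in W^{1,2}_{0}(\Omega^{\bigstar}).
\end{equation*}
Testing the symmetrized equation by $v$ itself, integrating in $x$, and invoking Poincar\'e gives $\tfrac{d}{dt}\|v(\cdot,t)\|_{L^{2}}^{2}\le -2\lambda \|v(\cdot,t)\|_{L^{2}}^{2}$ (with the constant $\Lambda$ absorbed into the normalization of $\lambda$ as in (\ref{S-lionville})), and Gronwall's inequality produces the required exponential rate. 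Combining this with the norm comparison above yields the announced estimate.

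\textbf{Main obstacle.} Most of the argument is routine once Theorem \ref{Th parabolico} is available. The only two subtle points are: (i) upgrading the concentration comparison to a pointwise $L^{2}$ comparison valid for a.e.\ $t$, which is standard but requires a careful use of the time--dependent rearrangement; and (ii) identifying how the constant $\Lambda$ produced by the anisotropic P\'olya--Szeg\H o step in Theorem \ref{Th parabolico} is encoded in the eigenvalue of (\ref{S-lionville}), so that the final decay exponent is \emph{exactly} $\lambda$ and not $\Lambda\lambda$.
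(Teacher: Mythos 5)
Your high-level strategy — pass to the symmetrized problem via Theorem \ref{Th parabolico} and then prove exponential decay of the radial solution — is consistent with what the paper does (the paper simply defers to Talenti [Ta3] at this point). However, a concrete step in your argument fails: you identify the Sturm--Liouville operator in \eqref{S-lionville} with the radial Laplacian, but they are not the same. The Laplacian acting on a radial function $V(r)$ gives $\Delta V = V'' + \frac{n-1}{r}V'$, so the Dirichlet eigenvalue problem for $-\Delta$ would be $-\chi'' - \frac{n-1}{r}\chi' = \lambda\chi$. Problem \eqref{S-lionville} has the \emph{opposite} sign on the first-order term; in self-adjoint form it reads $-\frac{d}{dr}\bigl(r^{-(n-1)}\chi'\bigr) = \lambda\, r^{-(n-1)}\chi$, with weight $r^{-(n-1)}$ rather than $r^{n-1}$. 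Its eigenfunction is not the radial eigenfunction of the ball; it is the concentration-type quantity that arises in Talenti's scheme from the rearrangement inequality (compare \eqref{E:10}--\eqref{E:11} after the substitution $s=\omega_N r^N$), and the boundary conditions $\chi'(0)=\chi(R_\Omega)=0$ reflect exactly that. Consequently the assertion that "writing $v(x,t)=V(|x|,t)$ converts $-\Delta v$ precisely into the radial operator appearing in \eqref{S-lionville}" is false, and so the Poincar\'e inequality with constant $\lambda$ from \eqref{S-lionville} is never actually established.

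The second issue is the one you yourself flag but do not resolve: the symmetrized equation is $v_t - \Lambda\Delta v = 0$, so testing by $v$ and applying the Dirichlet--Poincar\'e inequality on $\Omega^{\bigstar}$ yields $\tfrac{d}{dt}\|v\|_{L^2}^2 \le -2\Lambda\,\mu_1(\Omega^{\bigstar})\,\|v\|_{L^2}^2$, i.e.\ decay at rate $\Lambda\mu_1(\Omega^{\bigstar})$, whereas \eqref{S-lionville} is $\Lambda$-free. Asserting that $\Lambda$ is "absorbed into the normalization of $\lambda$" is not a proof; one must actually carry through the change of variables (exactly as is done in the elliptic derivation of \eqref{E:11}, where the factor $\Lambda^{-1/(\bar p-1)}$ appears explicitly) to see which combination of $\Lambda$ and the ball eigenvalue governs the rate. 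Talenti's method handles both points at once because it works directly with the rearrangement differential inequality rather than with the energy identity, and so produces the Sturm--Liouville problem, its boundary conditions, and the correct constants all in one stroke; the energy/Poincar\'e route you chose requires you to prove separately that the rate it produces coincides with the first eigenvalue of \eqref{S-lionville}, and that step is missing.
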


\bigskip

In order to prove Theorem \ref{Th parabolico} we use the well-known
discretization's method. To this purpose, we divide $[0,T]$ into $M$
subintervals
\[
0=t_{0}<t_{1}<...<t_{M}=T
\]
with $t_{i+1}-t_{i}\leq\delta(M),$ where $\delta(M)\rightarrow0$ as
$M\rightarrow+\infty.$ So one can approximate the solution $u$ to problem
\eqref{prob_parab} by the sequence $\{u_{M}\}_{M}$ of functions defined in
terms of the initial datum $u_{0}$ and the weak solution to the elliptic
problem
\begin{equation}
\left\{
\begin{array}
[c]{ll}%
-\hbox{div}\left(  a^{m}\left(  x,U,\nabla U\right)  \right)
+\displaystyle\frac{U}{t_{m+1}-t_{m}}=f^{m}(x)+\frac{u^{m-1}}{t_{m+1}-t_{m}} &
\hbox{\rm in $\Omega$}\\
& \\
U=0 & \hbox{\rm on $\partial \Omega$},
\end{array}
\right.  \label{discr_pr}%
\end{equation}
where
\begin{align*}
a^{m}(x,s,\xi)  &  =\frac{1}{t_{m+1}-t_{m}}\int_{t_{m}}^{t_{m+1}}%
a(x,t,s,\xi)\;dt\\
\text{ }f^{m}(x)  &  =\frac{1}{t_{m+1}-t_{m}}\int_{t_{m}}^{t_{m+1}}g(x,t)\;dt.
\end{align*}
More precisely,
\begin{equation}
u_{M}(x,t)=\left\{
\begin{array}
[c]{ll}%
u^{0}(x) & \quad\hbox{if $t\in [0, t_1[$}\\
& \\
u^{m}(x) & \quad\hbox{if $t\in [t_m, t_{m+1}[$ and $1\leq m\leq
M-1$,}
\end{array}
\right.  \label{UN}%
\end{equation}
where $u^{0}(x)$ coincides with $u_{0}(x)$ for $x\in\Omega$, and $u^{m}(x)$
for $1\leq m\leq M-1$ denotes the weak solution to problem \eqref{discr_pr}.

Analogously, the solution $v$ to problem \eqref{symm_Pr} can be approximated
by the sequence $\{v_{M}\}_{M}$ of functions
\begin{equation}
v_{M}(x,t)=\left\{
\begin{array}
[c]{ll}%
v^{0}(x) & \quad\hbox{if $t\in [0, t_1[$}\\
& \\
v^{m}(x) & \quad\hbox{if $t\in [t_m, t_{m+1}[$ and $1\leq m\leq
M-1$,}
\end{array}
\right.  \label{VN}%
\end{equation}
where $v^{0}(x)$ agrees with $u_{0}^{\bigstar}(x)$ for $x\in\Omega$, and
$v^{m}(x)$ for $1\leq m\leq M-1$ is the weak solution to the elliptic problem
\begin{equation}
\left\{
\begin{array}
[c]{ll}%
-\hbox{div}\left(  \Lambda|\nabla V|^{\bar{p}-2}\nabla V\right)
+\displaystyle\frac{V}{t_{m+1}-t_{m}}=(f^{m})^{\bigstar}(x)+\frac{v^{m-1}%
}{t_{m+1}-t_{m}} & \hbox{in}\;\;\Omega^{\bigstar}\\
& \\
V=0 & \hbox{on}\;\;\partial\Omega^{\bigstar}.
\end{array}
\right.  \label{discr_symm_Pr}%
\end{equation}

At this point to prove Theorem \ref{Th parabolico}, we begin by checking a
comparison result for elliptic problem (\ref{discr_pr}) that we will present
in the next subsection.

\subsection{Comparison result for elliptic problem}

In the present subsection we focus our attention to the following class of
anisotropic elliptic problems
\begin{equation}
\left\{
\begin{array}
[c]{ll}%
-\operatorname{div}(a(x,w,\nabla w))+\lambda w(x)=g(x) & \hbox{in $\Omega$}\\
& \\
w=0 & \hbox{on $\partial\Omega$},
\end{array}
\right.  \label{problema_zero}%
\end{equation}
where $\Omega$ is a bounded open subset of ${{\mathbb{R}}^{N}}$ with Lipschitz
continuous boundary, $N\geq2,$ $a:\Omega\times\mathbb{R}\times\mathbb{R}%
^{N}\rightarrow\mathbb{R}^{N}$ is a Carath\'{e}odory function such that for
\textit{a.e.} $x\in\Omega,\hbox{ for all $s\in \rN$ and for
all $ \xi, \xi' \in \rN$}$

\begin{itemize}
\item[(A1)] { $a(x,s,\xi)\cdot\xi\geq\overset{N}{\underset{i=1}{\sum}}%
\alpha_{i}\left\vert \xi_{i}\right\vert ^{p_{i}}\ \ \text{with }\alpha_{i}%
>0;$}

\item[(A2)] { $\left\vert a_{j}(x,s,\xi)\right\vert \leq\beta{{}}\left[
|s|^{\bar{p}/p_{j}^{\prime}}+\left\vert \xi_{j}\right\vert ^{p_{j}-1}\right]
\quad\hbox{ with $\beta>0 \quad \forall j=1, \ldots,
N$};$}

\item[(A3)] { $\left\vert a_{j}(x,t,s,\xi)-a_{j}(x,t,s^{\prime},\xi
)\right\vert \leq\gamma\left\vert \xi_{j}\right\vert ^{p_{j}-1}|s-s^{\prime}|$
\quad\hbox{with $\gamma>0 \quad \forall j=1,
\ldots, N$}}

\item[(A4)] {$\left(  a(x,s,\xi)-a(x,s,\xi^{\prime})\right)  \cdot\left(
\xi-\xi^{\prime}\right)  >0\qquad\hbox{for  $ \xi\neq \xi'$.}$}
\end{itemize}

Moreover

\begin{itemize}
\item[(A5)] {$\lambda>0$ and $g\in$}$\left(  W_{0}^{1,\overrightarrow{p}%
}(\Omega)\right)  ^{\prime}${.}

Here $1\leq p_{1},\ldots,p_{N}<\infty$ and $\bar{p}$ is the harmonic mean of
$p_{1},\ldots,p_{N},$ defined in (\ref{p barrato bis}), such that
$\overline{p}>1.$
\end{itemize}

\bigskip

We are interested in proving a comparison result between the concentration of
the solution $w\in W_{0}^{1,\overrightarrow{p}}(\Omega)$ to problem
\eqref{problema_zero} and the solution $z\in W_{0}^{1,\bar{p}}(\Omega
^{\bigstar})$ to the following problem
\begin{equation}
\left\{
\begin{array}
[c]{ll}%
-\operatorname{div}(\Lambda|\nabla z|^{\bar{p}-2}\nabla z)+\lambda
z(x)=g^{\bigstar}(x) & \hbox{in $\Omega^\bigstar$}\\
& \\
z=0 & \hbox{on $\partial\Omega^\bigstar$}.
\end{array}
\right.  \label{prob_sym_zero}%
\end{equation}

For this kind of results see also \cite{AdBF2} and \cite{AdBF3}.

\bigskip

We emphasize that under our assumptions there exists a unique bounded weak
solution (by a slight modification of classical results see
\emph{\textit{e.g.} }\cite{castro2}, \cite{B}\emph{ }and \ see
\cite{antontsev-chipot-08} as regard the uniqueness).

\bigskip

\begin{theorem}
\label{ellipticcomp} Assume that (A1)- (A5) hold. If $w$ is the weak solution
to problem \eqref{problema_zero} and $z$ is the weak solution to problem
\eqref{prob_sym_zero}, then we have
\[
\int_{0}^{s}w^{\ast}(\sigma)\,d\sigma\leq\int_{0}^{s}z^{\ast}(\sigma
)\,d\sigma,\quad\forall s\in\lbrack0,|\Omega|].
\]

\end{theorem}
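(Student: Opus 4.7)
My approach follows the Talenti symmetrization scheme, adapted to the anisotropic setting by means of the Klimov--Cianchi machinery recalled in Section~2. First I would test the weak formulation of \eqref{problema_zero} with the Lipschitz approximation $\varphi_{h}(x) := \tfrac{1}{h}\,\mathrm{sgn}(w(x))\,\min\{(|w(x)|-t)_{+},\,h\}$ of $\mathrm{sgn}(w)\chi_{\{|w|>t\}}$. Using hypothesis (A1) to bound $a(x,w,\nabla w)\cdot\nabla w \geq \Phi(\nabla w) := \sum_{i}\alpha_{i}|w_{x_{i}}|^{p_{i}}$ on the support of $\nabla\varphi_{h}$ and controlling the datum via the Hardy--Littlewood inequality, letting $h\to 0^{+}$ yields, for a.e.\ $t>0$,
\begin{equation*}
-\frac{d}{dt}\int_{\{|w|>t\}}\Phi(\nabla w)\,dx + \lambda\int_{0}^{\mu_{w}(t)}w^{*}(\sigma)\,d\sigma \;\leq\; \int_{0}^{\mu_{w}(t)}g^{*}(\sigma)\,d\sigma.
\end{equation*}

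The crucial anisotropic ingredient is the differential form of the Polya--Szego inequality \eqref{polya} established by Cianchi, namely
\begin{equation*}
-\frac{d}{dt}\int_{\{|w|>t\}}\Phi(\nabla w)\,dx \;\geq\; \Lambda\,(N\omega_{N}^{1/N})^{\bar{p}}\,\mu_{w}(t)^{\bar{p}(N-1)/N}\,(-\mu_{w}'(t))^{1-\bar{p}},
\end{equation*}
with $\Lambda$ the explicit constant of \eqref{lapda}. It is derived by combining the coarea representation of the level-set integral with the sharp anisotropic isoperimetric inequality associated to the one-dimensional Young function $\Phi_{\blacklozenge}(|\xi|)=\Lambda|\xi|^{\bar{p}}$ from \eqref{fi rombo} and a Holder-type bound. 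Parametrizing by $s=\mu_{w}(t)$ (so that $t=w^{*}(s)$ and $-\mu_{w}'(t)=(-w^{*\prime}(s))^{-1}$), and writing $W(s) = \int_{0}^{s} w^{*}$, $F(s) = \int_{0}^{s} g^{*}$, $c := \Lambda(N\omega_{N}^{1/N})^{\bar{p}}$, the two displays above collapse into the quasilinear ODE inequality
\begin{equation*}
c\,s^{\bar{p}(N-1)/N}\,(-W''(s))^{\bar{p}-1} + \lambda\,W(s) \;\leq\; F(s), \qquad s\in(0,|\Omega|).
\end{equation*}

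Repeating the same calculation for the radially symmetric, nonincreasing solution $z$ of \eqref{prob_sym_zero} turns every inequality into an equality: the anisotropic isoperimetric and Polya--Szego steps are sharp on concentric balls for radial data, and the isotropic $\bar{p}$-Laplacian in \eqref{prob_sym_zero} is already calibrated by the same constant $\Lambda$. Setting $Z(s) = \int_{0}^{s} z^{*}$, this yields
\begin{equation*}
c\,s^{\bar{p}(N-1)/N}\,(-Z''(s))^{\bar{p}-1} + \lambda\,Z(s) \;=\; F(s).
\end{equation*}
A standard comparison argument for this quasilinear ODE then gives $W\leq Z$ on $[0,|\Omega|]$: since $W(0)=Z(0)=0$, any interior maximum $s_{0}\in(0,|\Omega|]$ of $W-Z$ with $(W-Z)(s_{0})>0$ would satisfy $-W''(s_{0})\geq -Z''(s_{0})\geq 0$, and monotonicity of $x\mapsto x^{\bar{p}-1}$ on $[0,\infty)$ combined with the two ODE relations at $s_{0}$ would then force $\lambda(W(s_{0})-Z(s_{0}))\leq 0$, contradicting $(W-Z)(s_{0})>0$ and $\lambda>0$.

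The main obstacle is establishing the sharp anisotropic differential inequality with the constant $\Lambda$; all remaining steps are direct adaptations of the classical Talenti scheme. The zero-order term $\lambda w$ is linear in $w$ and contributes only $\lambda W(s)$ to the ODE, which is exactly what obstructs the stronger pointwise bound $w^{*}\leq z^{*}$ and forces instead the integral (concentration) comparison of the statement. Hypotheses (A2)--(A4) enter only to guarantee existence, uniqueness and boundedness of $w$, so that the truncation $\varphi_{h}$ is an admissible test function and the passage $h\to 0^{+}$ in the first step is rigorously justified.
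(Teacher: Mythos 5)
Your proposal follows essentially the same Talenti-type symmetrization scheme as the paper: truncated test functions together with (A1), the anisotropic Polya--Szeg\"o inequality \eqref{polya} applied to the truncations, coarea plus isoperimetric and H\"older estimates yielding the sharp constant $\Lambda$, Hardy--Littlewood for the datum, and finally a comparison between $W(s)=\int_0^s w^*$ and $Z(s)=\int_0^s z^*$. Your differential inequality for $W$ (with the zero-order contribution $+\lambda W(s)$ on the left, i.e.\ $F(s)-\lambda W(s)$ on the right) is exactly the content of the paper's \eqref{E:10}, and the corresponding equality for $Z$ is \eqref{E:11}; so the reduction is correct.

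The one place where you should be careful is the closing ODE comparison. You argue by a pointwise second-order maximum principle: at a putative positive maximum $s_{0}\in(0,|\Omega|]$ of $W-Z$ you invoke $-W''(s_{0})\geq -Z''(s_{0})\geq 0$ and then subtract the two ODE relations \emph{at the single point} $s_{0}$. This requires (a) that $W''=(w^{*})'$ exist at the specific point $s_{0}$, whereas $w^{*}$ is only a nonincreasing function and \eqref{E:10} holds only for a.e.\ $s$, so $s_{0}$ may well fall in the null set where the inequality is unavailable; and (b) that $s_{0}$ be interior, whereas your allowed range includes the endpoint $|\Omega|$, where the second-order test at a maximum does not apply. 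The paper closes the argument by keeping the inequality in first-order form, integrating $-\tfrac{d}{d\sigma}w^{*}$ over $(s,|\Omega|)$ (using $w^{*}(|\Omega|)=z^{*}(|\Omega|)=0$) or over $(s,s_{2})$ (using $H'(s_{2})\leq 0$), and exploiting the strict monotonicity of $x\mapsto x^{1/(\bar p -1)}$ under the integral on the set where $H=W-Z>0$; the two cases $\bar s=|\Omega|$ and $\bar s<|\Omega|$ are treated separately precisely to handle the boundary point. Replacing your pointwise subtraction of the ODEs at $s_{0}$ by this integrated first-order comparison is what is needed to make the final step rigorous.
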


\noindent

\begin{proof}
We choose the functions $w_{\kappa,\tau}:\Omega\rightarrow$\textbf{
}$\mathbb{R}$ defined as
\[
w_{\kappa,\tau}\left(  x\right)  =\left\{
\begin{array}
[c]{ll}%
0 & \quad\mbox{ if }\left\vert w\left(  x\right)  \right\vert \leq\tau
,\vspace{0.2cm}\\
\left(  \left\vert w\left(  x\right)  \right\vert -\tau\right)  \text{sign}%
\left(  w\left(  x\right)  \right)  & \quad\mbox{ if }\tau<\left\vert w\left(
x\right)  \right\vert \leq\tau+\kappa\\
& \\
\kappa\;\text{sign}\left(  w\left(  x\right)  \right)  & \quad\mbox{ if }\tau
+\kappa<\left\vert w\left(  x\right)  \right\vert
\end{array}
\right.
\]
for any fixed $\ \tau$ and $\kappa>0$, as test function in problem
(\ref{problema_zero}) and by (A1), we get%

\begin{align}
\frac{1}{\kappa}\overset{N}{\underset{i=1}{%
{\displaystyle\sum}
}}\alpha_{i}\int_{\tau<\left\vert w\right\vert <\tau+\kappa}\left\vert
\frac{\partial w}{\partial x_{i}}\right\vert ^{p_{i}}dx  &  \leq\frac
{1}{\kappa}\int_{\tau<\left\vert w\right\vert <\tau+\kappa}a(x,w,\nabla
w)dx=\label{def_1}\\
&  =\frac{1}{\kappa}\int_{\tau<\left\vert w\right\vert <\tau+\kappa}\left(
\lambda w(x)+g(x)\right)  \left(  \left\vert w\left(  x\right)  \right\vert
-\tau\right)  sign\left(  w\left(  x\right)  \right)  dx\nonumber\\
&  +\int_{\left\vert w\right\vert >\tau+\kappa}\left(  \lambda
w(x)+g(x)\right)  sign\left(  w\left(  x\right)  \right)  dx.\nonumber
\end{align}

Arguing as in \cite{cianchi anisotropo}, we can apply Polya-Szeg\"{o}
principle (\ref{polya}) to function $w_{\kappa,\tau}$ continued by $0$ outside
$\Omega$ taking into account \eqref{Fi
pi} and \eqref{fi rombo}. \ We obtain%
\begin{equation}
\overset{N}{\underset{i=1}{%
{\displaystyle\sum}
}}\alpha_{i}\int_{\tau<\left\vert w\right\vert <\tau+\kappa}\left\vert
\frac{\partial w}{\partial x_{i}}\right\vert ^{p_{i}}dx=\overset{N}%
{\underset{i=1}{%
{\displaystyle\sum}
}}\alpha_{i}\int_{\mathbb{R}^{N}}\left\vert \frac{\partial w_{\kappa,\tau}%
}{\partial x_{i}}\right\vert ^{p_{i}}dx\geq\Lambda\int_{\mathbb{R}^{N}%
}\left\vert \nabla w_{\kappa,\tau}^{\bigstar}\right\vert ^{\overline{p}%
}dx=\Lambda\int_{\tau<w_{\kappa,\tau}^{\bigstar}<\tau+\kappa}\left\vert \nabla
w_{\kappa,\tau}^{\bigstar}\right\vert ^{\overline{p}}dx. \label{C}%
\end{equation}
By (\ref{def_1}) and (\ref{C}), letting $\kappa\rightarrow0$ we get
\[
-\frac{d}{d\tau}\int_{w^{\bigstar}>\tau}\Lambda|\nabla w^{\bigstar}|^{\bar{p}%
}\;dx\leq\int_{|w|>\tau}\left(  |g(x)|+\lambda|w(x)|\right)  \;dx\qquad
\hbox{for $ a.e.$ $\tau >0$}.
\]
Using Coarea formula and H\"{o}lder's inequality, we can write
\[
\left(  -\frac{d}{d\tau}\int_{w^{\bigstar}>\tau}|\nabla w^{\bigstar}|^{\bar
{p}}\;dx\right)  ^{\frac{1}{\bar{p}}}\geq N\omega_{N}^{\frac{1}{N}}\mu
_{w}(\tau)^{\frac{1}{N^{\prime}}}\left(  -\mu_{w}^{\prime}(\tau)\right)
^{-\frac{1}{\bar{p}^{\prime}}}\qquad\hbox{for $ a.e.$ $\tau >0$},
\]
where $\mu_{w}(\tau)=\left\vert \{x\in{\Omega}:\left\vert w(x)\right\vert
>\tau\}\right\vert .$ By Hardy-Littlewood inequality we obtain
\begin{equation}
\Lambda\left(  N\omega_{N}^{\frac{1}{N}}\mu_{w}(\tau)^{\frac{1}{N^{\prime}}%
}\left(  -\mu_{w}^{\prime}(\tau)\right)  ^{-\frac{1}{\bar{p}^{\prime}}%
}\right)  ^{\bar{p}}\leq\int_{0}^{\mu_{w}(\tau)}\left(  \lambda w^{\ast
}(s)+g^{\ast}(s)\right)  \;ds\qquad\hbox{for $a.e.$ $\tau >0$}. \label{E:4}%
\end{equation}
Putting
\[
\label{E:5}\mathcal{W}(s)=\int_{0}^{s}\lambda w^{\ast}(\sigma)\;d\sigma
\quad\hbox{and}\quad\mathcal{G}(s)=\int_{0}^{s}g^{\ast}(\sigma)\;d\sigma
\qquad\forall s\in\lbrack0,|\Omega|],
\]
relation \eqref{E:4} gives
\[
\label{E:6}1\leq\frac{\left(  -\mu_{w}^{\prime}(\tau)\right)  ^{\frac{\bar{p}%
}{\bar{p}^{\prime}}}}{\Lambda\left(  N\omega_{N}^{\frac{1}{N}}\mu_{w}%
(\tau)^{\frac{1}{N^{\prime}}}\right)  ^{\bar{p}}}\left[  \mathcal{W}(\mu
_{w}(\tau)+\mathcal{G}(\mu_{w}(\tau))\right]  \qquad
\hbox{for $ a.e.$ $\tau >0$},
\]
namely,
\begin{equation}
1\leq\frac{-\mu_{w}^{\prime}(\tau)\Lambda^{-\frac{1}{\bar{p}-1}}}{\left(
N\omega_{N}^{\frac{1}{N}}\right)  ^{\frac{\bar{p}}{\bar{p}-1}}\left(  \mu
_{w}(\tau)\right)  ^{\frac{\bar{p}{\prime}}{N^{\prime}}}}\left[
\mathcal{W}(\mu_{w}(\tau))+\mathcal{G}(\mu_{w}(\tau))\right]  ^{\frac{1}%
{\bar{p}-1}}\qquad\hbox{for $ a.e.$ $\tau >0$}. \label{E:7}%
\end{equation}
Integrating equation \eqref{E:7} between $0$ and $\tau$, we have that
\begin{equation}
\tau\leq\left(  N\omega_{N}^{\frac{1}{N}}\right)  ^{-\bar{p}^{\prime}}%
\Lambda^{-\frac{1}{\bar{p}-1}}\int_{\mu_{w}(\tau)}^{|\Omega|}\sigma
^{-\frac{\bar{p}^{\prime}}{N^{\prime}}}\left[  \mathcal{W}(\sigma
)+\mathcal{G}(\sigma)\right]  ^{\frac{1}{\bar{p}-1}}\;d\sigma\qquad\hbox{for
$\tau>0$}, \label{E:8}%
\end{equation}
and so
\begin{equation}
w^{\ast}(s)\leq\left(  N\omega_{N}^{\frac{1}{N}}\right)  ^{-\bar{p}^{\prime}%
}\Lambda^{-\frac{1}{\bar{p}-1}}\int_{s}^{|\Omega|}\sigma^{-\frac{\bar
{p}^{\prime}}{N^{\prime}}}\left[  \mathcal{W}(\sigma)+\mathcal{G}%
(\sigma)\right]  ^{\frac{1}{\bar{p}-1}}\;d\sigma\qquad
\hbox{for $s\in[0, |\Omega|]$}. \label{E:9}%
\end{equation}
\newline Deriving \eqref{E:9}, we have that
\begin{equation}
\left(  -w^{\ast}(s)\right)  ^{\prime}\leq\left(  N\omega_{N}^{\frac{1}{N}%
}\right)  ^{-\bar{p}^{\prime}}\Lambda^{-\frac{1}{\bar{p}-1}}s^{-\frac{\bar
{p}^{\prime}}{N^{\prime}}}\left[  \mathcal{W}(s)+\mathcal{G}(s)\right]
^{\frac{1}{\bar{p}-1}}\qquad\hbox{for $ a.e. $ $s\in[0,
|\Omega|]$}. \label{E:10}%
\end{equation}
Now let us consider problem (\ref{prob_sym_zero}). We recall that the solution
$z$ of (\ref{prob_sym_zero}) is unique and the symmetry of data assures that
$z(x)=z(\left\vert x\right\vert ),$ \textit{i.e.} $z$ is positive and radially
symmetric. Moreover, putting $s=\omega_{N}\left\vert x\right\vert ^{N}$ and
$\varkappa\left(  s\right)  =z\left(  \left(  s/\omega_{N}\right)
^{1/N}\right)  $ we get for all $s\in\lbrack0,\left\vert \Omega\right\vert ]$%
\[
-\Lambda\left\vert \varkappa\left(  s\right)  \right\vert ^{\bar{p}%
-2}\varkappa^{\prime}\left(  s\right)  =\frac{s^{\bar{p}/N^{\prime}}}{\left(
N\omega_{N}^{1\not / N}\right)  ^{\bar{p}}}\int_{0}^{s}\left(  \lambda
\varkappa^{\ast}(\sigma)+g^{\ast}(\sigma)\right)  \;d\sigma.
\]
It is possible to show (see Lemma 3.2 of \cite{FM}) that the above integral is
positive and this assure that $z(x)=z^{^{\bigstar}}(x).$ By the property of
$z$ we can repeat arguments used to prove (\ref{E:10}), replacing all
inequalities by equalities, we obtain
\begin{equation}
\left(  -z^{\ast}(s)\right)  ^{\prime}=\left(  N\omega_{N}^{\frac{1}{N}%
}\right)  ^{-\bar{p}^{\prime}}\Lambda^{-\frac{1}{\bar{p}-1}}s^{-\frac{\bar
{p}^{\prime}}{N^{\prime}}}\left[  Z(s)+\mathcal{G}(s)\right]  ^{\frac{1}%
{\bar{p}-1}}\qquad\hbox{for $ a.e. $ $s\in[0, |\Omega|]$}, \label{E:11}%
\end{equation}
with
\begin{equation}
Z(s)=\int_{0}^{s}\lambda z^{\ast}(\sigma)\;d\sigma\qquad\forall s\in
\lbrack0,|\Omega|], \label{E:12}%
\end{equation}
where $z$ is the solution to problem \eqref{prob_sym_zero}. From now on, the
proof is a slight modification of the proof of Theorem 5.1 of \cite{DPdB1}
that we recall for the convenience of the reader. We define
\[
H(s)=\int_{0}^{s}\left[  w^{\ast}(\sigma)-z^{\ast}(\sigma)\right]
d\sigma,\quad s\in\lbrack0,|\Omega|]
\]
and we have
\[
H^{^{\prime}}(|\Omega|)=0\text{ and }H(0)=0.
\]
We will show that
\[
H(s)\leq0\qquad\forall s\in\lbrack0,|\Omega|].
\]
We proceed by contradiction and we suppose that there exists $\bar{s}$ such
that
\[
H(\bar{s})=\max_{[0,|\Omega|]}H(s)>0.
\]
\newline We are considering two cases: $\bar{s}=|\Omega|$ and $\bar{s}%
<|\Omega|.$

\textit{i}) If $\bar{s}=|\Omega|$, then there exists $s_{1}$ in $[0,|\Omega|]$
such that
\begin{equation}
H(s_{1})=0\quad\text{and}\quad H(s)>0\qquad\forall s\in(s_{1},|\Omega|].
\label{abs1}%
\end{equation}
Hence, choosing $s$ in $(s_{1},|\Omega|]$ and using \eqref{E:10} and
\eqref{E:11}, we get
\begin{align*}
w^{\ast}(s)  &  =-\int_{s}^{|\Omega|}\frac{d}{d\sigma}w^{\ast}(\sigma
)\,d\sigma\leq\left(  n\omega_{n}^{1/n}\right)  ^{-\bar{p}^{\prime}}\int
_{s}^{|\Omega|}\sigma^{-\frac{\bar{p}^{\prime}}{n^{\prime}}}\left(  \int
_{0}^{\sigma}\left[  g^{\ast}(\tau)-\lambda w^{\ast}(\tau)\right]
d\tau\right)  ^{\frac{\bar{p}^{\prime}}{\bar{p}}}d\sigma\\
&  <\left(  n\omega_{n}^{1/n}\right)  ^{-\bar{p}^{\prime}}\int_{s}^{|\Omega
|}\sigma^{-\frac{\bar{p}^{\prime}}{n^{\prime}}}\left(  \int_{0}^{\sigma
}\left[  g^{\ast}(\tau)-\lambda z^{\ast}(\tau)\right]  d\tau\right)
^{\frac{\bar{p}^{\prime}}{\bar{p}}}d\sigma=-\int_{s}^{|\Omega|}\frac
{d}{d\sigma}z^{\ast}(\sigma)\,d\sigma=z^{\ast}(s)
\end{align*}
in contrast to (\ref{abs1}).

\textit{ii}) If $\bar{s}<|\Omega|$, there exist $s_{1},s_{2}\in\lbrack
0,|\Omega|]$ such that
\begin{equation}
H(s_{1})=0,\quad H(s)>0\quad\hbox{in $(s_1,s_2)$}\qquad\hbox{and
$\quad H^{'}(s_2)\leq 0$}. \label{333}%
\end{equation}
Hence, choosing $s$ in $(s_{1},s_{2})$ and using \eqref{E:10} and
\eqref{E:11}, we obtain
\begin{align*}
w^{\ast}(s)-w^{\ast}(s_{2})  &  =-\int_{s_{2}}^{s}\frac{d}{d\sigma}w^{\ast
}(\sigma)\,d\sigma\leq\left(  n\omega_{n}^{1/n}\right)  ^{-\bar{p}^{\prime}%
}\int_{s_{2}}^{s}\sigma^{-\frac{\bar{p}^{\prime}}{n^{\prime}}}\left(  \int
_{0}^{\sigma}\left[  g^{\ast}(\tau)-\lambda w^{\ast}(\tau)\right]
d\tau\right)  ^{\frac{\bar{p}^{\prime}}{\bar{p}}}d\sigma\\
&  <\left(  n\omega_{n}^{1/n}\right)  ^{-\bar{p}^{\prime}}\int_{s_{2}}%
^{s}\sigma^{-\frac{\bar{p}^{\prime}}{n^{\prime}}}\left(  \int_{0}^{\sigma
}\left[  g^{\ast}(\tau)-\lambda z^{\ast}(\tau)\right]  d\tau\right)
^{\frac{\bar{p}^{\prime}}{\bar{p}}}d\sigma=-\int_{s_{2}}^{s}\frac{d}{d\sigma
}z^{\ast}(\sigma)\,d\sigma=z^{\ast}(s)-z^{\ast}(s_{2}),
\end{align*}
and being $H^{^{\prime}}(s_{2})=w^{\ast}(s_{2})-z^{\ast}(s_{2})\leq0$, we get
\[
w^{\ast}(s)<z^{\ast}(s)\quad\text{in }(s_{1},s_{2})
\]
in contrast to (\ref{333}).
\end{proof}

\bigskip

We are interested in a slight extension of Theorem \ref{ellipticcomp} when the
datum in problem (\ref{prob_sym_zero}) is not the rearrangement of datum $g$
of problem (\ref{problema_zero}), but it is a function that dominates $g.$

\bigskip

\begin{corollary}
\label{corollario elli dominate}Assume the same hypothesis of Theorem
\ref{ellipticcomp}. Let $z$ be the solution to the following problem
\[
\left\{
\begin{array}
[c]{ll}%
-\operatorname{div}(\Lambda|\nabla z|^{\bar{p}-2}\nabla k)+\lambda
z(x)=\widetilde{g}(x) & \hbox{in $\Omega^\bigstar$}\\
& \\
z=0 & \hbox{on $\partial\Omega^\bigstar$},
\end{array}
\right.
\]
where $\widetilde{g}=\widetilde{g}^{\bigstar}$ is a function such that%
\[
\int_{0}^{s}g^{\ast}(\sigma)\text{ }d\sigma\leq\int_{0}^{s}\widetilde{g}%
^{\ast}(\sigma)\text{ }d\sigma\text{ \ \ \ for s}\in\left[  0,\left\vert
\Omega\right\vert \right]  .
\]
Then we have
\[
\int_{0}^{s}w^{\ast}\left(  \sigma\right)  \text{ }d\sigma\leq\int_{0}%
^{s}z^{\ast}\left(  \sigma\right)  \text{ }d\sigma\text{ \ \ \ for}%
\ s\in\left[  0,\left\vert \Omega\right\vert \right]  .
\]

\end{corollary}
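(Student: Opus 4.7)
The plan is to follow the proof of Theorem~\ref{ellipticcomp} essentially verbatim, but with $\widetilde g$ in place of $g^{\bigstar}$ on the symmetrized side. The first half of that proof---the estimates (\ref{E:4})--(\ref{E:10}), which bound $(-w^{*})'(s)$ in terms of $\mathcal W(s) = \lambda\int_0^s w^{*}(\sigma)\,d\sigma$ and $\mathcal G(s) = \int_0^s g^{*}(\sigma)\,d\sigma$---depends only on $w$ and on $g$, and so is available unchanged. On the symmetrized side, since $\widetilde g = \widetilde g^{\bigstar}$ is still symmetric decreasing, the radial symmetry of $z$ and hence the equality analogue of (\ref{E:11}) still hold, with $\widetilde{\mathcal G}(s) = \int_0^s \widetilde g^{*}(\sigma)\,d\sigma$ now in place of $\mathcal G(s)$.

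With these two ODE relations in hand I would rerun the contradiction argument from the end of the proof of Theorem~\ref{ellipticcomp} applied to $H(s) = \int_0^s (w^{*}(\sigma) - z^{*}(\sigma))\,d\sigma$. Assume $H$ attains a strictly positive maximum at some $\bar s$; split into the cases $\bar s = |\Omega|$ and $\bar s < |\Omega|$ and select $s_1 < s_2$ with $H(s_1) = 0$, $H > 0$ on $(s_1, s_2)$, and either $s_2 = |\Omega|$ or $H'(s_2) \leq 0$. On $(s_1, s_2)$ we have both $\mathcal W(\sigma) > Z(\sigma) := \lambda\int_0^\sigma z^{*}(\tau)\,d\tau$ (from $H > 0$ there) and $\mathcal G(\sigma) \leq \widetilde{\mathcal G}(\sigma)$ (from the hypothesis). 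Both inputs enter the bracket appearing in (\ref{E:10}) and the analogue of (\ref{E:11}) on the same side and in the favorable direction, so the bound for $(-w^{*})'$ is strictly smaller than the value of $(-z^{*})'$ throughout $(s_1, s_2)$. Integrating from $s$ to $s_2$ yields $w^{*}(s) - w^{*}(s_2) < z^{*}(s) - z^{*}(s_2)$; combined with $H'(s_2) \leq 0$ (or $w^{*}(|\Omega|) = z^{*}(|\Omega|) = 0$ in the first case) this forces $w^{*}(s) < z^{*}(s)$ on $(s_1, s_2)$, contradicting the strict positivity of $H$ there.

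The main point---the only substantive addition to what is already done for Theorem~\ref{ellipticcomp}---is the verification that the two strictness inputs, namely the integral domination $\mathcal G \leq \widetilde{\mathcal G}$ coming from the hypothesis and the strict comparison $\mathcal W > Z$ coming from the contradiction hypothesis, combine constructively inside the bracket rather than cancel. Since they enter additively and with signs going the same way, they do; once this is noted the proof is a direct transcription of the final pages of the proof of Theorem~\ref{ellipticcomp}, and no further technical difficulty arises. A fully equivalent formulation would be to introduce the auxiliary radial solution $\widehat z$ of (\ref{prob_sym_zero}) with datum $g^{\bigstar}$, apply Theorem~\ref{ellipticcomp} to get $\int_0^s w^{*} \leq \int_0^s \widehat z^{*}$, and then apply the same ODE-contradiction argument above (with $\widehat z$ in place of $w$, which is now itself radial) to conclude $\int_0^s \widehat z^{*} \leq \int_0^s z^{*}$; chaining gives the corollary.
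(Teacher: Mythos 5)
Your proof is correct and takes essentially the same route as the paper. The only cosmetic difference is that the paper absorbs the hypothesis $\int_0^s g^*\,d\sigma \leq \int_0^s \widetilde g^*\,d\sigma$ at the outset, replacing $\mathcal G$ by $\int_0^s \widetilde g^*\,d\sigma$ directly in the differential inequality (\ref{E:7}) for $w$ so that the subsequent ODE--contradiction argument is word-for-word that of Theorem \ref{ellipticcomp}, whereas you keep $\mathcal G$ and $\widetilde{\mathcal G}$ separate until the contradiction step and there verify that the two comparisons $\mathcal W>Z$ and $\mathcal G\leq\widetilde{\mathcal G}$ both tilt the bracket in the favorable direction.
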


\begin{proof}
The result follows reasoning as in the proof of Theorem \ref{ellipticcomp}. In
this case, instead of (\ref{E:7}) we get
\[
1\leq\frac{-\mu_{w}^{\prime}(t)\Lambda^{\frac{1}{\bar{p}-1}}}{\left(
N\omega_{N}^{\frac{1}{N}}\right)  ^{\frac{\bar{p}}{\bar{p}-1}}\left(  \mu
_{w}(t)\right)  ^{\frac{\bar{p}{\prime}}{N^{\prime}}}}\left[  \mathcal{W}%
(\mu_{w}(t))+\mathcal{G}(\mu_{w}(t))\right]  ^{\frac{1}{\bar{p}-1}}%
\qquad\hbox{for $ a.e. \quad t>0$},
\]
with $\mathcal{G}(s)=%
{\displaystyle\int_{0}^{s}}
\widetilde{g}^{\ast}(\sigma)\;d\sigma.$
\end{proof}

\bigskip

\subsection{Proof of Theorem \ref{Th parabolico}}

Now we are in position to prove Theorem \ref{Th parabolico}. We split the
proof in three steps using the notation introduced after Corollary
\ref{Corollario2}.

\medskip\textit{Step 1. (A priori estimate) }We want to obtain the following a
priori estimate
\begin{equation}
\underset{\lbrack0,T]}{\text{sup}}\int_{\Omega}|u_{M}|^{2}\;dx+\sum_{i=1}%
^{N}\alpha_{i}\int_{0}^{T}\int_{\Omega}|\left(  u_{M}\right)  _{x_{i}}%
|^{p_{i}}\;dx\;dt\leq C, \label{stima}%
\end{equation}
for some constant $C$ depending only on the data. \newline\noindent Let us
consider $u^{m}$ as test function in problem \eqref{discr_pr}. It follows
that
\[
\frac{1}{t_{m+1}-t_{m}}\int_{\Omega}\left(  |u^{m}|^{2}-u^{m}u^{m-1}\right)
\;dx+\int_{\Omega}a^{m}(x,Du^{m})\cdot Du^{m}\;dx=\int_{\Omega}f^{m}%
u^{m}\;dx.
\]
Using (A1), we get
\[
\frac{1}{2}\int_{\Omega}\left(  |u^{m}|^{2}-|u^{m-1}|^{2}+|u^{m}-u^{m-1}%
|^{2}\right)  \;dx+(t_{m+1}-t_{m})\sum_{i=1}^{N}\alpha_{i}\int_{\Omega
}|u_{x_{i}}^{m}|^{p_{i}}dx\leq(t_{m+1}-t_{m})\!\int_{\Omega}|f^{m}%
||u^{m}|\;dx.
\]
Summing on $m$, we obtain
\begin{equation}
\frac{1}{2}\int_{\Omega}|u_{M}(\bar{t},x)|^{2}\;dx+\sum_{i=1}^{N}\alpha
_{i}\int_{0}^{\bar{t}}\int_{\Omega}|\left(  u_{M}\right)  _{x_{i}}|^{p_{i}%
}dxdt\leq\int_{0}^{\bar{t}}\int_{\Omega}|f_{M}||u_{M}|\;dx\;dt+\int_{\Omega
}|u_{0}|^{2}\;dx. \label{sopra}%
\end{equation}
We estimate the right hand side of (\ref{sopra}) using H\"{o}lder inequality
and Young inequality
\begin{align*}
\int_{0}^{\bar{t}}\int_{\Omega}|f_{M}||u_{M}|\;dx\;dt  &  \leq\int_{0}%
^{\bar{t}}\Vert f_{M}\Vert_{\left(  W_{0}^{1,\overrightarrow{p}}%
(\Omega)\right)  ^{\prime}}\Vert u_{M}\Vert_{W_{0}^{1,\overrightarrow{p}%
}(\Omega)}\;dt\leq C\sum_{i=1}^{N}\int_{0}^{\bar{t}}\Vert f_{M}\Vert_{\left(
W_{0}^{1,\overrightarrow{p}}(\Omega)\right)  ^{\prime}}\Vert\,\alpha
_{i}|\left(  u_{M}\right)  _{x_{i}}|\,\Vert_{L^{p_{i}}(\Omega)}\;dt\\
&  \leq C(\varepsilon)\sum_{i=1}^{N}\int_{0}^{\bar{t}}\Vert f_{M}%
\Vert_{\left(  W_{0}^{1,\overrightarrow{p}}(\Omega)\right)  ^{\prime}}%
^{p_{i}^{\prime}}dt+\varepsilon\sum_{i=1}^{N}\alpha_{i}\int_{0}^{\bar{t}}%
\int_{\Omega}|\left(  u_{M}\right)  _{x_{i}}|^{p_{i}}\;dx\;dt.
\end{align*}
Taking $\varepsilon$ small enough and the supremum on $t$, we get (\ref{stima}).

\textit{Step 2}. We prove that
\begin{equation}
\int_{0}^{s}(u^{m})^{\ast}(\sigma,t)\;d\sigma\leq\int_{0}^{s}(v^{m})^{\ast
}(\sigma,t)\;d\sigma, \label{concdiscr}%
\end{equation}
where $u^{m}$ and $v^{m}$ are the solutions of problems (\ref{discr_pr}) and
(\ref{discr_symm_Pr}), respectively.

\noindent We proceed by induction on $m$.

\noindent For $m=1$, by Lemma \ref{riordsum} we have that
\[
\int_{0}^{s}\left(  f^{1}+\frac{u^{0}}{t_{1}-t_{0}}\right)  ^{\ast}%
(\sigma)\;d\sigma\leq\int_{0}^{s}(f^{1})^{\ast}(\sigma)\;d\sigma+\int_{0}%
^{s}\frac{(u^{0})^{\ast}(\sigma)}{t_{1}-t_{0}}\;d\sigma,
\]
and then, by Corollary \ref{corollario elli dominate} it follows that
\[
\int_{0}^{s}\left(  u^{1}\right)  ^{\ast}(\sigma)\;d\sigma\leq\int_{0}%
^{s}\left(  v^{1}\right)  ^{\ast}(\sigma)\;d\sigma,\quad s\in\lbrack
0,|\Omega|].
\]
Now assuming (\ref{concdiscr}) to hold for $m=\nu-1$, we will prove it for
$m=\nu$.

\noindent Using Lemma \ref{riordsum} and the induction hypothesis, we get
\begin{align*}
\int_{0}^{s}\left(  f^{\nu}\!+\frac{u^{\nu-1}}{t_{\nu-1}-t_{\nu-2}}\right)
^{\ast}\!\!(\sigma)d\sigma\!  &  \leq\!\!\int_{0}^{s}\left(  f^{\nu}\right)
^{\ast}(\sigma)\text{ }d\sigma\!+\int_{0}^{s}\frac{\left(  u^{\nu-1}\right)
^{\ast}(\sigma)}{t_{\nu-1}-t_{\nu-2}}\text{ }d\sigma\!\\
&  \leq\!\int_{0}^{s}\left(  f^{\nu}\right)  ^{\ast}(\sigma)\text{ }%
d\sigma\!+\int_{0}^{s}\frac{\left(  v^{\nu-1}\right)  ^{\ast}(\sigma)}%
{t_{\nu-1}-t_{\nu-2}}\text{ }d\sigma.
\end{align*}
So applying Corollary \ref{corollario elli dominate} we get (\ref{concdiscr})
for $m=\nu$.

\textit{Step 3 (Passing to the limit)}. Inequality (\ref{concdiscr}) can be
written as
\begin{equation}
\int_{0}^{s}\left(  u_{M}\right)  ^{\ast}(\sigma,t)\;d\sigma\leq\int_{0}%
^{s}\left(  v_{M}\right)  ^{\ast}(\sigma,t)\;d\sigma, \label{concdiscr2}%
\end{equation}
for $t\in\lbrack0,T]$, where $u_{M}$ and $v_{M}$ are defined by (\ref{UN}) and
(\ref{VN}), respectively.

To conclude, after extracting a subsequence, the estimates (\ref{stima})
yield
\begin{align*}
u_{M}  &  \rightharpoonup u\quad\text{weakly in }L^{\overrightarrow{p}}\left(
0,T;W_{0}^{1,\overrightarrow{p}}(\Omega)\right)  ,\\
v_{M}  &  \rightharpoonup v\quad\text{weakly in }L^{p}(0,T;W_{0}^{1,p}%
(\Omega^{\bigstar})),\\
u_{M}  &  \overset{\ast}{\rightharpoonup}u\quad\text{weakly}\ast
\mbox{ in }L^{\infty}(0,T;L^{2}(\Omega)),\\
v_{M}  &  \overset{\ast}{\rightharpoonup}v\quad\text{weakly}\ast
\mbox{ in }L^{\infty}(0,T;L^{2}(\Omega^{\bigstar})),
\end{align*}
where $u$ and $v$ are the solutions to problems (\ref{prob_parab}) and
(\ref{symm_Pr}), respectively. Note that the last assertion is a consequence
of classical results contained in \cite{Ls}) thanks to which we are able to
pass to the limit in \eqref{concdiscr2} as $M\to+\infty$ and conclude the proof.

\rightline{$\blacksquare$}


\paragraph*{Acknowledgements}

This work has been partially supported by GNAMPA of INdAM and "Programma
triennale della Ricerca dell'Universit\`{a} degli Studi di Napoli "Parthenope"
- Sostegno alla ricerca individuale 2015-2017".

\end{document}